\title{\bf On rational functions with more than three branch points}
\author{Jijian Song$^1$ and Bin Xu$^2$}
\def\nd{\noindent}
\newtheorem{thm}{Theorem}[section]
\newtheorem{lem}{Lemma}[section]
\newtheorem{prop}{Proposition}[section]
\newtheorem{cor}{Corollary}[section]
\newtheorem{defi}{Definition}[section]
\newtheorem{rem}{Remark}[section]
\begin{document}

\maketitle

\nd{\small  $^{1,2}$Wu Wen-Tsun Key Laboratory of Mathematics, USTC, Chinese Academy of Sciences\\
$^{1,2}$School of Mathematical Sciences, University of Science and Technology of China.\\
No. 96 Jinzhai Road, Hefei, Anhui Province\  230026\  P. R. China.}

\nd {\small $^{1}$\Envelope smath@mail.ustc.edu.cn \quad\quad \ \  $^2$bxu@ustc.edu.cn}
\par\vskip0.5cm

\nd {\small {\bf Abstract:}  Let $\Lambda$ be a collection of partitions of a positive integer $d$ of the form
       $$(a_1,\cdots, a_p),\,(b_1,\cdots, b_q),\,(m_1+1,1,\cdots,1),\cdots, (m_l+1,1,\cdots,1),$$
where $(m_1,\cdots, m_l)$ is a partition of $p+q-2>0$. We prove that there exists a rational function on the Riemann sphere $\overline{\mathbb{C}}$ with branch data $\Lambda$ if and only if $$\max\bigl(m_1,\cdots,m_l\bigr) < \frac{d}{{\rm GCD}(a_1,\cdots, a_p,b_1,\cdots, b_q)}.$$ As an application, we give a new class of branch data which can be realized by Belyi functions on the Riemann sphere.\\

\nd{\bf Keywords.} branch data, Realizability Problem, Belyi function, Riemann's existence theorem

\nd {\bf 2010 Mathematics Subject Classification.} Primary 20B35; secondary 14H30

\section{Introduction}

Let $X$ and $Y$  be two compact connected Riemann surfaces and $f: X \to Y$ a holomorphic branched covering of degree $d$. For each point $q$ in $Y$, there associates a partition $\lambda(q)=(k_1,...,k_r)$ of $d$ such that, over a suitable neighborhood of $q$ in $Y$, $f$ is equivalent to the map
  \[\{1,...,r\}\times {\mathbb D}\to {\mathbb D},\quad (j,z)\mapsto z^{k_j},\quad {\rm where}\quad {\mathbb D}:=\{z\in {\mathbb C}:|z|<1\},\]
with $q$ corresponding to $0$ in ${\mathbb D}$. For any partition $\lambda=(k_1,k_2,\cdots, k_r)$ of $d$, we define its length ${\rm Len}(\lambda)=r$. We call the partition $\lambda$ of $d$ {\it non-trivial} if ${\rm Len}(\lambda)<d$. For the morphism $f: X \to Y$, the branch set $B_f$ consists of those points $q$ in $Y$ for which $\lambda(q)$ is nontrivial. The collection $\Lambda=\{\lambda(q):q\in B_f\}$ (with repetitions allowed) is called the {\it branch data} of $f$ and
  \[v(f):=\sum_{q\in B(f)}\,\Bigl(d-{\rm Len}\bigl(\lambda(q)\bigr)\Bigr)\]
the {\it total branching } of $f$. By the Riemann-Hurwitz theorem, we have that
  \[v(f)=2g(X)-2-d\bigl(2g(Y)-2\bigr)\]
where $g(X)$ (resp. $g(Y)$) denotes the genus of $X$ (resp. $Y$). Therefore, the total branching  $v(f)$ is an even non-negative integer.

The following problem was first proposed by Edmonds-Kulkarni-Stong \cite{EKS84} and we can trace its history to Hurwitz \cite{Hur1891}.

\nd {\bf Realizability Problem.} Given a compact connected Riemann surface $Y$ and a collection $\Lambda=\{\lambda_1,\cdots,\lambda_k\}$ of non-trivial partitions of a positive integer $d$, does there exist another compact connected Riemann surface $X$ together with a branched covering $f:X\to Y$ such that $\Lambda$ is its branch data?

If it is the case above, we call the collection $\Lambda$ is {\it realizable} or {\it realized by a branched covering}.

See the classical \cite{Bo82, EKS84, Eze78, Fr76, Ger87, Huse62, KZ95, Me84, Me90, Sin70, Thom65} and the more recent \cite{Ba01, LZ04, MSS04, OP06, Pa09, PaPe09, PaPe12, PerPe06, PerPe08, Zh06}  about this problem. Here we only review some necessary background and a small part of known results which are closely related to our discussions in the sequel.

Recall that in order to be realizable, a collection $\Lambda$ should satisfy the condition that its {\it total branching}
  \[v(\Lambda):=\sum_{j=1}^k\,\bigl(d-{\rm Len}(\lambda_j)\bigr)\]
is even. We call such a collection {\it compatible}. It is proved in \cite[Theorem 9]{Huse62} and \cite[Section 3]{EKS84} that a compatible collection is always realizable if $g(Y)>0$. Hence, we always assume that $Y$ is the Riemann sphere $\overline{\mathbb C}$ in the sequel. Namely, we are going to only consider rational functions on compact Riemann surfaces. It turns out that a compatible collection is not always realizable in this case. We call a compatible collection an {\it exception} if it is not realizable. Zheng \cite{Zh06} founded by computer all the exceptions of degree $\leq 22$. Pervova-Petronio \cite{PerPe06, PerPe08} used a variety of techniques to give some new infinite series of exceptions, and they used dessins d'enfants to make a theoretical explanation to part of the exceptions given by Zheng \cite{Zh06}. Instead of constructing exceptions, Edmonds-Kulkarni-Stong \cite{EKS84} proposed the so-called {\it prime degree conjecture}, which says that each compatible collection with prime degree is realizable and this conjecture was reduced in the same paper to the collections with only three partitions. In \cite{PaPe09, PaPe12}, Pascali-Petronio  proved some results which provided strong support to this conjecture.

Characterizing branch data of all rational functions is a very deep and difficult problem, which seems far from being accessible nowadays.  Hence, it is meaningful to find reasonably simple, sufficient conditions for realizable collections. Besides the theorems in \cite{PaPe09, PaPe12, PerPe06, PerPe08}, some of the other known results are as follows: Thom \cite{Thom65} showed that a compatible collection is realizable if one partition in it has length one.  Edmonds-Kulkarni-Stong \cite[Theorem 5.4]{EKS84} proved that a compatible collection with degree $d\not=4$ is realizable when its total branching $\geq 3(d-1)$. In addition, the exceptions with $d=4$ are precisely those with partitions $(2,2),\cdots,(2,2),(3,1)$. Moreover, Boccara \cite{Bo82} obtained a complete determination of the realizability of the collection $\Lambda$ which consists of the following three partitions of $d$:
  $$(a_1,\cdots, a_p),\,(b_1,\cdots, b_q),\,(m+1,1,\cdots,1).$$
He proved that $\Lambda$ is realizable if and only it satisfies one of the following two conditions:\\

\nd $\bullet$ $v(\Lambda)\geq 2d$ is even.

\nd $\bullet$ $v(\Lambda)=2d-2$ and $\displaystyle{m< \frac{d}{{\rm GCD}(a_1,\cdots, a_p,b_1,\cdots, b_q)}}$. Note that $m=p+q-2$ in this case.\\

Generalizing the second part of Boccara's result. We show the following

\begin{thm}[Main Theorem]
\label{thm:rat}
Let $d$ and $l$ be two positive integers. Given a collection $\Lambda$ consists of $l+2$ partitions of $d$:
  \[\Lambda=\{(a_1,\cdots, a_p),\,(b_1,\cdots, b_q),\,(m_1+1,1,\cdots,1),\cdots, (m_l+1,1,\cdots,1)\}\]
with $(m_1,\cdots, m_l)$ a partition of $p+q-2>0$. Then there exists a rational function on $\overline{\mathbb{C}}$ with $\Lambda$ as its branch data if and only if
  \[\max\,\bigl(m_1,\cdots,m_l\bigr)<\frac{d}{{\rm GCD}(a_1,\cdots, a_p,b_1,\cdots, b_q)}.\]
\end{thm}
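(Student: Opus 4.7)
The plan treats necessity and sufficiency separately, both anchored on the factorization $f = \pi \circ g$ with $\pi(w) = w^k$, where $k = \mathrm{GCD}(a_1,\ldots,a_p,b_1,\ldots,b_q)$ and $g$ is a rational function of degree $d/k$. This factorization is available whenever every zero and pole of $f$ has multiplicity divisible by $k$, in which case $f$ admits a rational $k$-th root. For necessity, assume $f$ realizes $\Lambda$; then the multiplicities at zeros (resp.\ poles) of $f$ are the $a_i$'s (resp.\ $b_j$'s), all divisible by $k$, so $f = \pi \circ g$. At each ramified preimage $x_i$ of a branch point $p_i \notin \{0,\infty\}$, we have $g(x_i) \notin \{0,\infty\}$, so $\pi$ is unramified at $g(x_i)$. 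By multiplicativity of ramification indices, $\mathrm{ram}_g(x_i) = m_i + 1 \leq \deg g = d/k$, which gives the required $\max m_i < d/k$.

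For sufficiency, the same factorization reduces the problem to the case $k = 1$. If we build $g$ of degree $d/k$ realizing the reduced collection
\[
\Lambda' = \{(a_1/k,\ldots,a_p/k),\,(b_1/k,\ldots,b_q/k),\,(m_1+1,1,\ldots,1),\ldots,(m_l+1,1,\ldots,1)\}
\]
(partitions of $d/k$) with its $l$ extra branch points placed in $\overline{\mathbb{C}}\setminus\{0,\infty\}$, then $f = g^k = \pi\circ g$ realizes $\Lambda$: over $0$ and $\infty$, ramifications of $g$ are multiplied by $k$, restoring the $a_i$'s and $b_j$'s; over each extra branch point $p_i = \zeta^k$, the map $\pi$ is unramified at all $k$ preimages of $p_i$, so the branchings of $g$ above them reassemble into the partition $(m_i+1,1,\ldots,1)$ of $d$. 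In $\Lambda'$ the hypothesis $\max m_i < d/k$ is exactly the partition-validity condition $m_i+1 \leq d/k$, and $\mathrm{GCD}' = 1$; so it suffices to prove the theorem when $k = 1$, where the hypothesis becomes automatic.

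By Riemann's existence theorem, the $k = 1$ case amounts to producing $\sigma_0,\sigma_\infty,\tau_1,\ldots,\tau_l \in S_d$ of the prescribed cycle types, with $\sigma_0\sigma_\infty\tau_1\cdots\tau_l = \mathrm{id}$ and generating a transitive subgroup. When $p+q-1 \leq d$, Boccara's theorem in the $l = 1$ case supplies $\sigma_0,\sigma_\infty,\tau$ with $\tau$ a single $(p+q-1)$-cycle, and the elementary identity
\[
(1,2,\ldots,p+q-1) = (1,\ldots,m_1+1)\,(m_1+1,\ldots,m_1+m_2+1)\cdots(m_1+\cdots+m_{l-1}+1,\ldots,p+q-1)
\]
factors $\tau$ into overlapping cycles $\tau_1,\ldots,\tau_l$ of the required lengths; transitivity is preserved since the new generating set contains Boccara's. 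The main obstacle is the complementary range $p + q - 1 > d$, in which no single $(p+q-1)$-cycle fits in $S_d$ and so the Boccara template is unavailable. Here the permutation tuple must be built directly: one chooses $\sigma_0,\sigma_\infty$ so that $\sigma_0\sigma_\infty$ has a cycle type admitting a factorization as a product of $l$ cycles of lengths $m_i+1$ whose supports overlap within $\{1,\ldots,d\}$, and then verifies that the overlaps can be arranged so that the generated subgroup remains transitive; the $\mathrm{GCD} = 1$ hypothesis is essential for both the existence of the factorization and the transitivity, and carrying out this direct combinatorial construction is the main technical content of the proof.
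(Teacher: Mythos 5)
Your necessity argument, the reduction to $k={\rm GCD}(a_1,\dots,b_q)=1$ via $f=g^k$, and the treatment of the range $p+q-1\leq d$ all match the paper: the paper likewise writes $f=F^k$ to get $\max m_i<d/k$, observes that realizability of the reduced collection implies realizability of $\Lambda$, and in its ``Part I'' splits Boccara's single $(p+q-1)$-cycle into consecutive overlapping cycles of lengths $m_1+1,\dots,m_l+1$ exactly as in your displayed identity. Up to that point the proposal is sound (the transitivity remark is fine too, since the product $\tau_1\cdots\tau_l$ recovers Boccara's cycle inside the generated subgroup).

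The gap is the complementary case $d\leq p+q-2$, which you correctly identify as the obstacle but then dispose of in one sentence (``one chooses $\sigma_0,\sigma_\infty$ so that $\sigma_0\sigma_\infty$ has a cycle type admitting a factorization \dots and then verifies that the overlaps can be arranged so that the generated subgroup remains transitive''). This is not a proof strategy but a restatement of the goal, and it is precisely where all the work in the paper lies. The paper's Part II first reduces $l$ to $2$ or $3$ by regrouping the $m_i$'s (checking that the regrouped parts still satisfy the weight bound), then runs an induction on $m=p+q-2$ in which the residue vector is contracted (replacing $a_p$ and $b_q$ by $a_p-b_q$, using that $b_{q-1}=b_q=1$ forces primitivity of the contraction) \emph{simultaneously} with decrementing the largest part $m_l$ of the partition; a separate claim verifies that the degree--weight inequality survives this double contraction, and the base case of all-ones residue vectors needs its own explicit construction. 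Transitivity is the delicate point: for $l\geq 2$ the criterion ``every cycle factor of $\tau$ meets some $\sigma_k$'' is necessary but no longer sufficient (the paper gives the counterexample $(12),(34),(12)(34)$ in $S_4$), so the inductive step must track which $\sigma_k$ the new cycle factor meets and transfer supports between adjacent $\sigma_k$'s case by case. None of this is supplied or even outlined in your proposal, so the sufficiency direction remains unproved in exactly the regime where the theorem has content beyond Boccara's result.
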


We call a rational function on a compact Riemann surface a {\it Belyi function} if it has at most three branch points. As an application of main theorem, we have

\begin{thm}
\label{thm:Belyi}
Let $d$ and $r$ be two positive integers, and a collection $\Lambda$ consist of partitions of $d$:
  \[ \Lambda = \{(a_1,\cdots, a_p),\,(b_1,\cdots, b_q),\,(c_1+1,\cdots, c_r+1,1,\cdots, 1)\} \]
where $(c_1,\cdots, c_r)$ is a partition of $p+q-2>0$. If
  \[\max\,\bigl(c_1,\cdots,c_r\bigr)<\frac{d}{{\rm GCD}(a_1,\cdots, a_p,b_1,\cdots, b_q)}\]
then the modified collection
  $${\widetilde \Lambda}:=\{(ra_1,\cdots, ra_p),\, (rb_1,\cdots, rb_q),\, (c_1+1,\cdots, c_r+1,1,\cdots,1)\}$$
of partitions of $dr$ can be realized by a Belyi function on $\overline{\mathbb{C}}$.
\end{thm}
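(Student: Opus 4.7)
The plan is to realize $\widetilde\Lambda$ as the branch data of a composition $h=\phi\circ f$, where $\phi(z)=z^r$ and $f\colon\overline{\mathbb{C}}\to\overline{\mathbb{C}}$ is a carefully chosen realization of the \emph{expanded} collection
\[
\Lambda':=\bigl\{(a_1,\ldots,a_p),\,(b_1,\ldots,b_q),\,(c_1+1,1,\ldots,1),\ldots,(c_r+1,1,\ldots,1)\bigr\},
\]
which consists of $r+2$ partitions of $d$. The point is to arrange the $r$ branch points of $f$ carrying partitions $(c_i+1,1,\ldots,1)$ to lie in a single $\phi$-fibre, so that the composition has only three branch points.

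First, since $(c_1,\ldots,c_r)$ is a partition of $p+q-2>0$ and the hypothesis reads $\max c_i<d/\gcd(a_1,\ldots,a_p,b_1,\ldots,b_q)$, Theorem~\ref{thm:rat} applied with $l=r$ and $m_i=c_i$ produces a rational function of degree $d$ realizing $\Lambda'$. By Riemann's existence theorem this is equivalent to a tuple of permutations $\tau_0,\tau_\infty,\tau_1,\ldots,\tau_r\in S_d$ whose cycle types are the corresponding partitions of $\Lambda'$, satisfying $\tau_0\,\tau_1\cdots\tau_r\,\tau_\infty=1$ and generating a transitive subgroup of $S_d$. Reapplying Riemann's existence theorem with the prescribed branch set $B=\{0,\infty,1,\zeta,\zeta^2,\ldots,\zeta^{r-1}\}\subset\overline{\mathbb{C}}$, where $\zeta=e^{2\pi i/r}$, and with $\tau_i$ assigned to the loop around $\zeta^{i-1}$, we obtain a rational function $f\colon\overline{\mathbb{C}}\to\overline{\mathbb{C}}$ of degree $d$ with branch data $\Lambda'$ whose branch set is exactly $B$.

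Now set $h:=\phi\circ f$ with $\phi(z)=z^r$, a rational function of degree $dr$. Since $\phi$ is branched only over $\{0,\infty\}$ and $\phi(B)\subseteq\{0,1,\infty\}$, the branch set of $h$ lies in $\{0,1,\infty\}$, so $h$ is a Belyi function. Its branch data is read off fibrewise: over $0$, the fibre $\phi^{-1}(0)=\{0\}$ is totally $\phi$-ramified of index $r$, so the $h$-partition over $0$ is obtained from the $f$-partition $(a_1,\ldots,a_p)$ by multiplying each part by $r$, giving $(ra_1,\ldots,ra_p)$; symmetrically the $h$-partition over $\infty$ is $(rb_1,\ldots,rb_q)$. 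Over $1$, the fibre $\phi^{-1}(1)=\{1,\zeta,\ldots,\zeta^{r-1}\}$ is $\phi$-unramified and coincides with the remaining branch points of $f$, so the $h$-partition over $1$ is the concatenation of the $f$-partitions at $\zeta^{i-1}$, namely $(c_1+1,\ldots,c_r+1,1,\ldots,1)$. These match $\widetilde\Lambda$ exactly.

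The one delicate point is precisely the requirement that the $r$ branch points of $f$ carrying partitions $(c_i+1,1,\ldots,1)$ all lie in the single $\phi$-fibre $\phi^{-1}(1)$; otherwise the composition would acquire additional branch points and fail to be Belyi. Riemann's existence theorem resolves this with no extra work: once Theorem~\ref{thm:rat} produces the monodromy tuple combinatorially, we are free to place the branch set of $f$ anywhere on $\overline{\mathbb{C}}$, and in particular at $\{0,\infty\}$ together with the $r$-th roots of unity.
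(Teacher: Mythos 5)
Your proposal is correct and follows essentially the same route as the paper: realize the expanded collection $\{(a_1,\ldots,a_p),(b_1,\ldots,b_q),(c_1+1,1,\ldots,1),\ldots,(c_r+1,1,\ldots,1)\}$ via the main theorem with the branch points placed at $0$, $\infty$ and the $r$-th roots of unity, then post-compose with $z\mapsto z^r$ (i.e.\ take $f^r$). Your write-up just makes explicit the fibrewise bookkeeping and the freedom in Riemann's existence theorem to prescribe the branch locus, which the paper leaves implicit.
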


In the remaining of the article, we give proofs of the two theorems. And at the very end, we propose a conjecture concerning rational functions on a Riemann surface of positive genus, which is a natural generalization of our main theorem

\vspace{0.5cm}

\section{Proof of main theorem}
\label{sec:rat}
In Subsection \ref{subsec:Rie} we prove the necessary part of main theorem and observe that the sufficient part can be reduced to the case of ${\rm GCD}(a_1,\cdots,a_p, b_1,\cdots, b_q)=1$. Moreover, we recall that by the Riemann existence theorem (\cite[Theorem 2, p.49]{Don11}) the sufficient part is equivalent to
the existence of certain permutations in the symmetry group $S_d:=S_{\{1,2,\cdots,d\}}$ associated with the collection $\Lambda$.  For the completeness, we give in Subsection \ref{subsec:three} a proof in our own strategy for the case of $l=1$ of main theorem which was also proved by Boccara \cite{Bo82}.
We prove Case $l\geq 2$ of main theorem in Subsection \ref{subsec:>3}.

\subsection{Riemann existence theorem}
\label{subsec:Rie}
At first, we prove the necessary part of main theorem.

\begin{proof}
Suppose that there exists a rational function $f$ on $\overline{\mathbb C}$ realizing the branch data $\Lambda$. Using suitable M{\" o}bius transformations if necessary, we can assume that $f$ has form
\begin{equation}
\label{equ:res}
  f(z)=\frac{(z-z_1)^{a_1}\cdots (z-z_p)^{a_p}}{(z-w_1)^{b_1}\cdots (z-w_q)^{b_q}}
\end{equation}
where $z_1,\,\cdots,\,z_p,\,w_1,\,\cdots,\, w_q$ are $(p+q)$ distinct complex numbers. Let
$$k= {\rm GCD}  (a_1, \cdots,  a_p,  b_1, \cdots, b_q).$$
Then we can write $f$ as $f=F^k$ for some rational function on $\overline{\mathbb C}$. And $F$ has branch data of the form
\[\{(a_1/k,\cdots, a_p/k),\,(b_1/k,\cdots, b_q/k),\,(m_1+1,1,\cdots,1),\cdots, (m_l+1,1,\cdots,1)\}.\]
Since $F$ has degree $d/k$, $\max\bigl(m_1,\cdots,m_l\bigr) < d/k$. We are done in this part.
\end{proof}

On the other hand, reversing the order of the above argument, we see that {\it if the collection
  \[\{(a_1/k,\cdots, a_p/k),\,(b_1/k,\cdots, b_q/k),\,(m_1+1,1,\cdots,1),\cdots, (m_l+1,1,\cdots,1)\}\]
is realized by some rational function, then so is $\Lambda$}. Hence, in order to show the sufficient part, we may assume ${\rm GCD}(a_1,\cdots, b_q)=1$.

We need to prepare some notions before proving the sufficient part of main theorem.

\begin{defi}
\label{defi:res}
  {\rm Let $m$ be a non-negative integer. A vector $\alpha=(a_1,\,a_2,\,\cdots,\,a_{m+2})$ in ${\Bbb Z}^{m+2}$ is called a {\it residue vector} with $(m+2)$ components if $a_1+a_2+\cdots+a_{m+2}=0$ and $a_1 a_2\cdots a_{m+2}\not=0$. Two residue vectors $\alpha=(a_1,\cdots,a_{m+2})$ and $\beta=(b_1,\cdots,b_{m+2})$ are called {\it equivalent}, denoted by $\alpha\sim\beta$, if there is a nonzero rational number $\mu$ and a permutation $\sigma$ in the symmetry group $S_{m+2}$ such that
    \[\mu\cdot\alpha=(\mu a_1,\cdots, \mu a_{m+2})=\sigma(\beta):=\bigl(b_{\sigma(1)},\cdots, b_{\sigma(m+2)}\bigr).\]
  This is an equivalence relation in the set of residue vectors with $(m+2)$ components. The degree of the residue vector $\alpha$ is defined to be
    \[\deg\, \bigl(a_1,\cdots, a_{m+2}\bigr)=\frac{\sum_{a_j>0}\, a_j}{{\rm GCD}\bigl(a_1,\cdots, a_{m+2}\bigr)}.\]
  We call a residue vector $\alpha=(a_1,\cdots, a_{m+2})$ {\it primitive} if ${\rm GCD}(a_1,\cdots, a_{m+2})=1$. Clearly, the degree of a primitive residue vector equals the sum of all its positive components. Observe also that the logarithmic differential $d\,(\log\, f)=\frac{df}{f}$ of a rational function $f$ in \eqref{equ:res} has residues $a_1,\cdots, a_p,-b_1,\cdots,-b_q$, which form a residue vector with degree $d/{\rm GCD}\bigl(a_1,\cdots, a_p, b_1,\cdots, b_q\bigr)$}.
\end{defi}

\begin{defi}
\label{def:par}
  {\rm Let $\lambda=(\lambda_1,\,\lambda_2,\,\cdots,\, \lambda_l)$ be a partition of a positive integer $n$. The {\it weight} of $\lambda$ is defined to be}
    $${\rm wt}(\lambda) = \max (\lambda_1, \cdots, \lambda_l)$$
\end{defi}

Use the notions in main theorem and denote by $\lambda$ the partition $(m_1,\cdots, m_l)$ of $m = p+q-2>0$ and by $\alpha$ the residue vector $(a_1,\cdots, a_p,-b_1,\cdots, -b_q)$. Then the condition in the theorem can be concisely re-expressed as
  \[\deg \alpha>{\rm wt}(\lambda).\]
By the Riemann existence theorem \cite[Theorem 2, p.49]{Don11}, the sufficient part of the main theorem is equivalent to the following

\begin{thm}
\label{thm:per}
  Under the assumptions of main theorem, if $\deg \alpha > {\rm wt}(\lambda)$, then there exist $(l+2)$ permutations of $\tau_1, \tau_2, \sigma_1,\cdots, \sigma_l$ in the symmetry group $S_d=S_{\{1,2,\cdots, d\}}$ of $\{1,2,\cdots, d\}$ such that
  \begin{itemize}
    \item $\tau _1 \tau _2 \sigma_1 \cdots \sigma_l=e$, where $e$ is the unit in $S_d$ and permutations are multiplied from right to left;
    \item $\tau _1$ has the type of $a_1^1a_2^1\cdots a_p^1$, $\tau _2$ of $b_1^1b_2^1\cdots b_q^1$ and $\sigma_k$ of $(1+m_k)^1 1^{d-m_k-1}$ for all $k=1,\cdots,l$;
    \item The subgroup $\langle\,\tau _1, \tau _2, \sigma_1,\cdots,\sigma_l\rangle$ of $S_d$ acts transitively on $\{1,2,\cdots,d\}$.
  \end{itemize}
\end{thm}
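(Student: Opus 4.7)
By Subsection 2.1 we may assume $\mathrm{GCD}(a_1,\ldots,a_p,b_1,\ldots,b_q)=1$, so that $\deg\alpha = d$ and the hypothesis simplifies to $\max_k m_k < d$. The strategy is induction on $l$. The base case $l = 1$ is Boccara's theorem, which the authors reprove in Subsection 2.2; I take it as given here.

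For the inductive step the natural idea is \emph{cycle decomposition}: obtain the $\sigma_k$'s by splitting a single large cycle produced by a three-partition problem. When $p+q-2 < d$, Boccara supplies $\tau_1,\tau_2,\sigma\in S_d$ with $\tau_1\tau_2\sigma = e$, where $\sigma$ is a single $(p+q-1)$-cycle. Conjugating so that $\sigma = (1,2,\ldots,m+1)$ with $m = p+q-2$, set $M_k := m_1+\cdots+m_k$ and $\sigma_k := (M_{k-1}+1,M_{k-1}+2,\ldots,M_k+1)$. A direct computation (or a short induction on $l$ using $(1,\ldots,N_1)(N_1,\ldots,N_2) = (1,\ldots,N_2)$) shows $\sigma_1\sigma_2\cdots\sigma_l = \sigma$. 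Then each $\sigma_k$ has the prescribed cycle type $(m_k+1,1,\ldots,1)$, the product identity $\tau_1\tau_2\sigma_1\cdots\sigma_l = \tau_1\tau_2\sigma = e$ holds, and transitivity is preserved since $\sigma \in \langle\sigma_1,\ldots,\sigma_l\rangle \subseteq \langle\tau_1,\tau_2,\sigma_1,\ldots,\sigma_l\rangle$.

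The main obstacle is the complementary range $p+q-2 \geq d$, which is still permitted here by the weaker hypothesis $\max_k m_k < d$ but lies outside the reach of Boccara's $l=1$ theorem (no $(p+q-1)$-cycle exists in $S_d$). To handle this I would construct the $\sigma_k$'s directly in $S_d$, arranging their supports to overlap on shared indices so that their product realizes an admissible cycle structure for $(\tau_1\tau_2)^{-1}$; the freedom of choosing $\tau_1,\tau_2$ within their conjugacy classes together with the freedom in choosing the overlap pattern among the $\sigma_k$'s should provide enough flexibility. A plausible route is a secondary induction on $l$ where one merges two $\sigma_k$'s whose combined cycle length stays below $d$, accompanied by a separate base construction in the extremal regime where every such merge would produce a cycle of length $\geq d$. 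I expect the delicate step to be transitivity: unlike the easy range, no single long cycle certifies it for free, and one must verify that the chosen overlaps among the $\sigma_k$'s prevent the generated subgroup from preserving any nontrivial partition of $\{1,\ldots,d\}$. Carrying this out in all remaining subcases is the technical core of Subsection 2.3.
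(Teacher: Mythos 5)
Your first half is correct and coincides with the paper's Part I: when $d\geq m+1$ with $m=p+q-2$, invoke the $l=1$ case to get a single $(m+1)$-cycle $\sigma$, normalize it to $(1,2,\ldots,m+1)$, and split it as $\sigma_1\cdots\sigma_l$ with $\sigma_k=(M_{k-1}+1,\ldots,M_k+1)$; the product identity, cycle types, and transitivity all follow as you say. The problem is that for the complementary regime $d\leq m$ you have only a plan, not a proof: phrases like ``should provide enough flexibility'' and ``a plausible route is'' defer exactly the part of the theorem that is new. As it stands the proposal proves nothing beyond Boccara's case together with an easy cycle-splitting observation.

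Concretely, the missing ideas are these. First, the paper does \emph{not} run a secondary induction on $l$ in the hard regime; instead it reduces once and for all to $l\in\{2,3\}$ (your merging idea works for this reduction, since after OA one can group consecutive $m_k$'s into three blocks each of size $<d$), and then inducts on $m$ via a \emph{contraction of the residue vector}. The enabling observation, which your sketch does not contain, is that $d\leq m=p+q-2\leq 2q-2$ together with $\sum_j b_j=d$ forces $b_{q-1}=b_q=1$; hence $\widehat{\alpha}=(a_1,\ldots,a_{p-1},a_p-b_q,-b_1,\ldots,-b_{q-1})$ is again primitive with $\deg\widehat{\alpha}=d-1$, and the modified partition $\lambda_1=(m_1,\ldots,m_{l-1},m_l-1)$ still satisfies ${\rm wt}(\lambda_1)<\deg\widehat{\alpha}$ (the paper's Claim 2, which needs a small separate argument when $m_{l-1}=m_l$). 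One then lifts a solution from $S_{d-1}$ to $S_d$ by multiplying by a transposition $(x,d)$, and the transitivity worry you correctly flag is discharged not by ad hoc overlap bookkeeping but by two reusable lemmas (Corollaries 2.2 and 2.3 in the paper) saying that extending or re-splicing a cycle by one point preserves transitivity of the generated subgroup; the case analysis over which cycle $\mu_p$ meets ($\sigma_1$, $\sigma_2$, or $\sigma_3$) is where the real work lives. Finally, the extremal base case where all $a_i=b_j=1$ (so no contraction is possible) needs the separate explicit construction of Proposition 2.2. Without these ingredients the second half of your argument is a gap, not a proof.
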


The following lemma will be useful later, which follows from the Riemann existence theorem and the argument in the first three paragraphs of this subsection.

\begin{lem}
\label{lem:prim}
  For each $m\geq 0$, proving Theorem \ref{thm:per} is equivalent to proving its variant where $\alpha$ is  primitive. We call the latter {\it the primitive version} of Theorem \ref{thm:per}.
\end{lem}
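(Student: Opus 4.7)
The plan is to prove both directions of the equivalence separately. The direction ``general $\Rightarrow$ primitive'' is immediate because the primitive version of Theorem~\ref{thm:per} is simply its restriction to residue vectors $\alpha$ satisfying ${\rm GCD}(a_1,\ldots,a_p,b_1,\ldots,b_q)=1$; hence any proof of the general statement handles it as a special case. The substantive content lies in the reverse direction, ``primitive $\Rightarrow$ general''.

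For the nontrivial direction, my plan would be to invert the factorization $f = F^k$ that already appeared in the necessity argument of the main theorem. Starting from a non-primitive residue vector $\alpha = (a_1,\ldots,a_p,-b_1,\ldots,-b_q)$ with $k := {\rm GCD}(a_1,\ldots,a_p,b_1,\ldots,b_q) > 1$, I would first note that the scaled vector $\alpha/k$ is primitive with $\deg(\alpha/k) = \deg\alpha = d/k$, so the inequality $\deg\alpha > {\rm wt}(\lambda)$ is preserved for $\alpha/k$ paired with the same partition $\lambda = (m_1,\ldots,m_l)$. Applying the primitive version to $\alpha/k$ produces permutations in $S_{d/k}$; by the Riemann existence theorem, these correspond to a rational function $F : \overline{\mathbb{C}} \to \overline{\mathbb{C}}$ of degree $d/k$ whose branch data is the ``divided-by-$k$'' version of $\Lambda$. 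Using the freedom to prescribe branch point locations in the existence theorem, I would place the branch points of $F$ associated to the first two partitions at $0$ and $\infty$ and the remaining $l$ branch points at points $c_1,\ldots,c_l \in \mathbb{C} \setminus \{0\}$ in generic position. Then $f := F^k$ would be a rational function of degree $d$, and a direct multiplicity count should show it realizes the original branch data $\Lambda$; invoking the Riemann existence theorem once more would then yield the desired permutations in $S_d$.

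The main technical obstacle is the multiplicity bookkeeping for $f = F^k$ at the branch points other than $0$ and $\infty$. Over $c_j^k$, the $f$-preimages split into those coming from the $F$-branch point $c_j$ (contributing the pattern $(m_j+1, 1, \ldots, 1)$ of length $d/k - m_j$) and those coming from the other $k$-th roots $\zeta^i c_j$ for $i = 1, \ldots, k-1$, where $\zeta$ is a primitive $k$-th root of unity. For this to reproduce $\Lambda$, I would need the values $c_j^k$ to be pairwise distinct across $j$ and no $\zeta^i c_j$ with $i \neq 0$ to be a branch point of $F$; both are generic conditions that can be arranged at the preceding choice-of-locations step. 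Granted this, the fibre of $f$ over $c_j^k$ consists of one preimage of multiplicity $m_j+1$ together with $d - m_j - 1$ unramified preimages, matching the partition $(m_j+1, 1, \ldots, 1)$ of $d$ that $\Lambda$ prescribes, and completing the reduction.
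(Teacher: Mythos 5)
Your proposal is correct and follows essentially the same route as the paper, which justifies the lemma by ``reversing'' the factorization $f=F^{k}$ from the necessity argument together with the Riemann existence theorem. Your explicit genericity check on the locations $c_{1},\dots,c_{l}$ (so that the values $c_{j}^{k}$ stay distinct and no $\zeta^{i}c_{j}$ hits another branch point of $F$) is a worthwhile detail that the paper leaves implicit.
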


We shall prove Theorem \ref{thm:per} and its primitive version simultaneously by induction on $m=p+q-2$ in the sequel of this section. The proof will be divided into two parts regarding $l=1$ or $l>1$.

Without loss of generality, we may assume the residue vector satisfies the following order assumption:

\nd \underline{\bf Order Assumption (OA)}   $a_1\leq a_2\leq \cdots \leq a_p$, $b_1\geq \cdots\geq b_q$ and
$1 \leq p\leq q$.

\subsection{Branch data with three partitions}
\label{subsec:three}

For the completeness of the manuscript, we include in this subsection the proof of the well known case where $l=1$ of Theorem \ref{thm:per} (see \cite{Bo82}), whose strategy we also use while proving in Subsection \ref{subsec:>3} the $l>1$ case of the theorem.

Here we first make a recall of the case $l=1$ of Theorem \ref{thm:per}.
\begin{prop}[Case $l=1$ of Theorem \ref{thm:per}]
\label{prop:OneZero}
  Let $\alpha = (a_1,\cdots,a_p,-b_1,\cdots,-b_q)$ be a residue vector and $\lambda = (m)$ be a partition of $m=p+q-2>0$ such that ${\rm deg}\,\alpha>{\rm wt}(\lambda)=m$. Then there exist three permutations $\tau_1, \tau_2, \sigma_1$ in $S_d$ satisfying the following three properties{\rm :}
    \begin{itemize}
       \item $\tau _1 \tau _2 \sigma_1=e$;
       \item $\tau _1$ has the type of $a_1^1a_2^1\cdots a_p^1$, $\tau _2$ of $b_1^1b_2^1\cdots b_q^1$ and $\sigma_1$ of $(1+m)^1 1^{d-m-1}$;
       \item The subgroup $\langle\,\tau _1, \tau _2, \sigma_1\rangle$ of $S_d$ acts transitively on $\{1,2,\cdots,d\}$.
    \end{itemize}
\end{prop}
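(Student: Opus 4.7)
The plan is to prove Proposition \ref{prop:OneZero} by induction on $m = p + q - 2$. By Lemma \ref{lem:prim} I may assume $\alpha$ is primitive, so that $d = \sum_i a_i = \sum_j b_j$ and the hypothesis ${\rm deg}\,\alpha > {\rm wt}(\lambda) = m$ reduces to $d \geq m + 1$. For the base case $m = 1$ the Order Assumption forces $p = 1$ and $q = 2$, so $\tau_1$ must be a $d$-cycle and $\sigma_1$ a transposition. Taking $\tau_1 = (1, 2, \cdots, d)$ and $\sigma_1 = (1, b_2 + 1)$, a direct cycle calculation shows that $\tau_1 \sigma_1$ splits as a product of two cycles of lengths $b_1$ and $b_2$, so $\tau_2 := (\tau_1 \sigma_1)^{-1}$ has the required type $b_1^1 b_2^1$; transitivity is automatic since $\tau_1$ alone is already a $d$-cycle.

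For the inductive step $m \geq 2$, the substantive case is $p \geq 2$. I would merge $a_1$ and $a_2$ into a single entry to form a shorter residue vector $\alpha' = (a_1 + a_2, a_3, \cdots, a_p, -b_1, \cdots, -b_q)$ with $m' = m - 1$ and ${\rm deg}\,\alpha' = d > m'$, and invoke the induction hypothesis (together with Lemma \ref{lem:prim}) to obtain permutations $\tau_1', \tau_2', \sigma_1'$ with $\tau_1' \tau_2' \sigma_1' = e$, with the corresponding cycle types and with $\langle \tau_1', \tau_2', \sigma_1' \rangle$ transitive. Then I would look for a transposition $\rho = (x, y)$ with both $x, y$ inside the $(a_1 + a_2)$-cycle of $\tau_1'$, arranged so that (i) $\rho \tau_1'$ splits this cycle into two cycles of lengths $a_1$ and $a_2$, (ii) $x$ lies in the $m$-cycle of $\sigma_1'$, and (iii) $y$ is a fixed point of $\sigma_1'$. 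Setting $\tau_1 := \rho \tau_1'$, $\tau_2 := \tau_2'$ and $\sigma_1 := \sigma_1' \rho$, the identity $\tau_1 \tau_2 \sigma_1 = \rho \tau_1' \tau_2' \sigma_1' \rho = e$ holds automatically, and (ii)--(iii) together imply that $\sigma_1$ has the required $(m+1, 1^{d-m-1})$ cycle type; transitivity of the new triple follows from that of the old one. The degenerate case $p = 1$ has to be treated separately by a direct construction, where $\tau_1 = (1, 2, \cdots, d)$ is forced and $\sigma_1$ is chosen as a suitably placed $(m+1)$-cycle whose product with $\tau_1$ realizes the prescribed partition $(b_1, \cdots, b_q)$, generalizing the base-case cycle computation.

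The hard part will be guaranteeing the existence of the transposition $\rho$ in the inductive step: $x$ must simultaneously lie in the $(a_1 + a_2)$-cycle of $\tau_1'$ and in the $m$-cycle of $\sigma_1'$, $y$ must simultaneously lie in the $(a_1 + a_2)$-cycle of $\tau_1'$ and be fixed by $\sigma_1'$, and the positions of $x$ and $y$ along that cycle must differ by exactly $a_1$ (or $a_2$). Meeting all three conditions at once is a combinatorial problem, to be attacked using the inequality $d > m$, the transitivity of $\langle \tau_1', \tau_2', \sigma_1' \rangle$, and the freedom either to conjugate the triple or to choose a different pair of entries to merge (for instance two of the $-b_j$'s when no suitable $(a_i, a_j)$ is available). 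This is precisely the cycle-surgery technique that the paper announces will be reused for the $l > 1$ case in Subsection \ref{subsec:>3}.
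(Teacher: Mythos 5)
Your induction-on-$m$ framework is the same as the paper's, but your choice of contraction --- merging the two positive entries $a_1,a_2$ into $a_1+a_2$ --- has a genuine gap that the paper's argument is specifically built to avoid. The degree of a residue vector is $\bigl(\sum_{a_j>0}a_j\bigr)/{\rm GCD}$, and although merging two entries of the same sign keeps the numerator equal to $d$, it can create a nontrivial common divisor and make the degree collapse below $m'=m-1$. Concretely, take $\alpha=(1,1,4,-2,-2,-2)$, which is primitive, satisfies OA, and has $d=6$, $m=4$, $\deg\alpha=6>m$. Your merged vector is $\alpha'=(2,4,-2,-2,-2)$ with ${\rm GCD}=2$ and $\deg\alpha'=3\not>m'=3$, so the induction hypothesis cannot be invoked; indeed the equivalent primitive statement would demand a $4$-cycle inside $S_3$. (A different merge happens to work for this example, but you give no argument that a good same-sign merge always exists, and proving that is a real case analysis, not a formality.) This is exactly why the paper contracts a positive entry against a negative one, replacing $a_p$ and $-b_q$ by $a_p-b_q$, and why its Lemma \ref{lem:contr} needs a page of case analysis: the whole content of the three-partition case is \emph{choosing which pair to contract} so that $\deg\widehat{\alpha}>m-1$ survives, including precisely the situations where the naive contraction acquires a common divisor.

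The second gap is the splitting surgery, which you yourself flag as ``the hard part'' but do not resolve. You need a transposition $\rho=(x,y)$ with $x,y$ at distance exactly $a_1$ along the $(a_1+a_2)$-cycle of $\tau_1'$, with $x$ in the $m$-cycle $\sigma_1'$ and $y$ fixed by it. Writing that cycle as $(c_1,\dots,c_{a_1+a_2})$ and letting $S\subseteq\mathbb{Z}/(a_1+a_2)$ be the set of positions whose entries lie in $\sigma_1'$, you need some $i$ with exactly one of $i$, $i+a_1$ in $S$; if no such $i$ exists then $S$ is a union of cosets of the subgroup generated by $\gcd(a_1,a_2)$, which is a genuine possibility whenever $\gcd(a_1,a_2)>1$. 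So the triple handed to you by the induction may admit no valid $\rho$, and the remedies you list (conjugating, merging a different pair) are not shown to succeed. The paper's opposite-sign contraction sidesteps this entirely: the inductive triple lives in the smaller group $S_{d-b_q}$, the missing $b_q$-cycle $\nu_q$ is inserted on the fresh symbols $\{d-b_q+1,\dots,d\}$, and the gluing transposition $(x,y)$ only needs $x\in\mu_p\cap\sigma_1$ --- guaranteed by Lemma \ref{lem:NotEmp} --- and $y$ an arbitrary point of $\nu_q$, with no distance constraint at all. In short, your same-sign merge trades the paper's hard Lemma \ref{lem:contr} for two new difficulties of comparable or greater size, and as written the proof does not go through.
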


\begin{lem}
\label{lem:pq}
  Let $\alpha=(a_1,\cdots,a_p,-b_1,\cdots,-b_q)$ be a residue vector with $\deg \alpha >m=p+q-2$. If $m>0$, we have $a_p>b_q$.
\end{lem}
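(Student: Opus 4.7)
The plan is to argue by contradiction: suppose $a_p \leq b_q$, and deduce from the Order Assumption together with the zero-sum condition that the whole residue vector must be very rigid, forcing $m=0$.

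First I would observe that if $a_p \leq b_q$, then by OA we have $a_i \leq a_p \leq b_q \leq b_j$ for every pair $(i,j)$. Setting $D := a_1+\cdots+a_p = b_1+\cdots+b_q$ (the two are equal because $\alpha$ is a residue vector, i.e., has zero sum), the chain of inequalities $q\,b_q \leq D \leq p\,a_p \leq p\,b_q$ immediately yields $q \leq p$. Combining this with OA's assumption $p \leq q$, I would conclude $p=q$.

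Once $p=q$ is known, every inequality in the chain becomes an equality, so $a_1=\cdots=a_p=a_p$, $b_1=\cdots=b_q=b_q$, and $a_p=b_q$. Writing $c$ for this common value, $\alpha=(c,\ldots,c,-c,\ldots,-c)$ with $p$ copies of each sign, so $\gcd(a_1,\ldots,b_q)=c$ and $\sum_{a_j>0}a_j=pc$, giving $\deg\alpha = p$. The hypothesis $\deg\alpha>m=p+q-2=2p-2$ then reads $p>2p-2$, i.e.\ $p<2$, forcing $p=1$ and hence $m=0$, contradicting the standing assumption $m>0$. Therefore $a_p>b_q$, as claimed.

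There is no genuine obstacle here; the whole content of the lemma is the pigeonhole-type squeeze $q b_q \leq D \leq p\,a_p$ that, under OA, collapses $\alpha$ to a single scaled vector. The only mild care is to remember that $D$ is the common sum of positive and negative parts (so that both bounds on $D$ are available) and that primitivity is \emph{not} assumed, which is precisely why the degree of the collapsed vector computes to $p$ rather than to $D=pc$.
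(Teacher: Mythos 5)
Your proof is correct and takes essentially the same route as the paper's: the paper splits into the cases $a_p<b_q$ (where the zero-sum condition gives an immediate contradiction) and $a_p=b_q$ (where the same squeeze forces $p=q$, a constant vector of degree $p$, and then $p>2p-2$), while you merely merge the two cases into a single chain of inequalities. The substance is identical, including the correct observation that $\deg\alpha=p$ because of the GCD in the definition of degree.
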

\begin{proof}
  If $a_p<b_q$, then it follows from the order assumption (OA) that $\sum\limits_{i=1}^p a_i<\sum\limits_{j=1}^q b_j$, contradicting the definition of residue vector. If $a_p-b_q=0$, then by OA and $m=p+q-2>0$ we have $p=q \geq 2$ and $a_i=b_j$ for all $i=1,2,\cdots, p$ and $j=1,2,\cdots, q$. Since $p=\deg\,\alpha\,>m=p+q-2=2p-2=2q-2$, we obtain $p=q=1$, which contradicts $p+q>2$.
\end{proof}

To prove Proposition \ref{prop:OneZero}, we need to use the following lemma, where we propose a new concept, called {\it contraction of a residue vector}.

\begin{lem}
\label{lem:contr}
  Under the assumptions of Proposition \ref{prop:OneZero}, there exist $i_0 \in \{1,2,\cdots, p\}$ and $j_0 \in \{1,2,\cdots, q\}$ such that
    $$\widehat{\alpha}=\Bigl(a_1,\cdots, a_{i_0-1},a_{i_0}-b_{j_0},a_{i_0+1},\cdots, a_p,\ -b_1,\cdots,\widehat{(-b_{j_0})},\cdots, -b_q\Bigr)$$
  is a residue vector with $\deg\,\widehat{\alpha}\,>\, m-1$, where $a_{i_0}-b_{j_0}>0$ and the hat over term $(-b_{j_0})$ means that $(-b_{j_0})$ is removed. Note that the number of components of $\widehat{\alpha}$ is one less than that of $\alpha$. We call $\widehat{\alpha}$ a {\bf contraction} of $\alpha$.
\end{lem}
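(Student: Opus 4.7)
The plan is to try $(i_0, j_0) = (p, q)$ as the default choice---Lemma~\ref{lem:pq} guarantees $a_p - b_q > 0$---and, if the resulting degree bound fails, fall back to the alternate $(i_0, j_0) = (1, q)$. Using Lemma~\ref{lem:prim}, I first reduce to $\gcd(\alpha) = 1$, whence $\sum_i a_i \geq p + q - 1$. Writing $g := \gcd(\widehat{\alpha}^{(p,q)})$ and $B := \sum_{j < q} b_j$, the identity $\sum_i a_i = \sum_j b_j$ gives $\deg \widehat{\alpha}^{(p,q)} = B/g$. A quick check shows $\gcd(g, a_p)$ and $\gcd(g, b_q)$ each divide $\gcd(\alpha) = 1$, so $g$ is coprime to both $a_p$ and $b_q$.

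The analysis then splits on the relation between $g$ and $b_q$. When $g \leq b_q$, the default $(p, q)$ works: the subcase $g = 1$ reduces to $B \geq p + q - 2$, splitting further on $b_q = 1$ versus $b_q \geq 2$ and using the order assumption $p \leq q$; the subcase $g \geq 2$ forces $g < b_q$ by coprimality, so each $b_j$ with $j < q$ is a multiple of $g$ exceeding $g$, hence at least $2g$, giving $B/g \geq 2(q-1) \geq p + q - 2 > m - 1$. The complementary case $g > b_q$ has $g \geq 2$; when $p = 1$ the default already delivers $\deg \widehat{\alpha}^{(p,q)} \geq q - 1 > m - 1$, so assume $p \geq 2$. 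Then $a_1 \geq g > b_q$, and I switch to $\widehat{\alpha}^{(1,q)} := (a_2, \ldots, a_p, a_1 - b_q, -b_1, \ldots, -b_{q-1})$, setting $g' := \gcd(\widehat{\alpha}^{(1,q)})$. Since $g' \mid a_p$ and $\gcd(g, a_p) = 1$, the numbers $g$ and $g'$ are coprime, yet both divide every $b_j$ with $j < q$, so $g g' \mid b_j$. This yields $B \geq (q-1) g g'$ and hence $\deg \widehat{\alpha}^{(1,q)} = B/g' \geq (q-1) g \geq 2(q-1) \geq p + q - 2 > m - 1$.

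The main obstacle I expect is identifying the correct alternate pair: experimentation reveals that $(p, q)$ can fail only in the regime $g > b_q$, and the coprimality of $g'$ with $g$ forced through the component $a_p$ is precisely what rescues the bound under $(1, q)$. Everything else is elementary arithmetic once these structural coprimality claims are observed.
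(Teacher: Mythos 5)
Your proof is correct, and it takes a genuinely different route from the paper's. The paper works through a cascade of three candidate contractions --- the pair $(p,q)$ giving $\beta_1$, then $(p,q-1)$ giving $\beta_2$, then $(1,q)$ --- with the case analysis driven by whether $\beta_1$ and $\beta_2$ are primitive, and with detailed deductions about the individual values ($b_{q-1}=D$, $b_{q-2}\in\{E,2E\}$, etc.). You use only two candidates, $(p,q)$ and $(1,q)$, and organize everything around the single quantity $g=\gcd\bigl(\widehat{\alpha}^{(p,q)}\bigr)$. The two structural observations that make this work --- that $g$ is automatically coprime to both $a_p$ and $b_q$ because any common divisor of $g$ with either would divide every component of the primitive $\alpha$, and that the gcds $g$, $g'$ of contractions through two different positive entries are mutually coprime yet both divide each shared $b_j$, forcing $b_j\geq gg'$ --- are exactly what let you skip the paper's intermediate $\beta_2$ and its primitivity subcases. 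I checked all the branches: the subcase $g=1$ via $d\geq p+q-1$ and the split on $b_q=1$ versus $b_q\geq 2$; the subcase $2\leq g<b_q$ via $b_j\geq 2g$; the case $g>b_q$ with $p=1$ via $\deg=B/g\geq q-1=m$; and the case $g>b_q$, $p\geq 2$ via $\deg\widehat{\alpha}^{(1,q)}=B/g'\geq (q-1)g\geq 2(q-1)\geq p+q-2$. All are sound (both arguments ultimately lean on the order assumption $p\leq q$ in the same place). One cosmetic remark: the reduction to primitive $\alpha$ is not really an application of Lemma \ref{lem:prim} (which concerns Theorem \ref{thm:per}); it is the direct observation, also made at the start of the paper's proof, that degree and the contraction property are invariant under rescaling a residue vector by its gcd.
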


\begin{proof}
  Without loss of generality, we assume $\alpha$ is primitive. If not, we may replace it by a primitive residue vector equivalent to it.

  By OA and Lemma \ref{lem:pq}, we have $q\geq 2$ and $a_p-b_q>0$, and then we obtain another residue vector
    $$\beta_1:=\bigl(a_1,\cdots, a_{p-1},\, a_p-b_q,\ -b_1, \cdots, -b_{q-1}\bigr)$$
  with $(m-1)$ components. We divide the proof into the following three steps.

  \begin{itemize}
    \item[Step 1.] Assume $q=2$. Then $m=p+q-2=p$ and $\deg \beta_1 \geq p$, so $\beta_1$ is a contraction of $\alpha$. We assume that $q>2$ in the sequel of the proof.
    \item[Step 2.] Suppose that $\beta_1$ is primitive. Then $\deg \beta_1> (m-1)=(p+q-3)$, so it gives a contraction of $\alpha$. Indeed, if not, then by OA we have
        $$2q-2\geq p+q-2 > \deg \beta_1 =-b_q+\sum\limits_{i=1}^{p}\,a_i=-b_q+\sum\limits_{j=1}^{q}\, b_j=
        \sum\limits_{j=1}^{q-1}\, b_j.$$
         thus $b_{q-1}=b_q=1$ and $b_{q-2}\leq 2$. Moreover, if $b_{q-2}=2$, then
         $b_1=\cdots=b_{q-2}=2$. Therefore
         $$m+1=p+q-1 >1+\sum\limits_{j=1}^{q-1}\, b_j=\sum\limits_{j=1}^{q}\, b_j= \sum\limits_{i=1}^p a_i= \deg \alpha,$$
          which contradicts the assumption on the degree of $\alpha$.
                    We assume that $\beta_1$ is {\bf not} primitive in the left part of the proof.
    \item[Step 3.] If $\deg \beta_1> (m-1)$, then we are done. So without loss of generality, suppose $\deg\,\beta_1 \leq (m-1)=(p+q-3)$. Let $D>1$ be the greatest common divisor of all components of $\beta_1$. Then, by OA and the definition of degree,  we obtain that
        $$2q-2\geq p+q-2 > \deg \beta_1 =\frac{a_p-b_q}{D}+\sum\limits_{i=1}^{p-1}\,\frac{a_i}{D}=\sum\limits_{j=1}^{q-1}\, \frac{b_j}{D}.$$
        Hence $b_{q-1}=D$, $D|b_j$ for all $j=1,\cdots, q-2$, $D|a_i$ for all $i=1,\cdots,p-1$, and $D|(a_p-b_q)$. Consequently, $a_p\geq b_q+D>D=b_{q-1}$ and we obtain another residue vector
          $$\beta_2:=\bigl(a_1,\cdots, a_{p-1},\,a_p-b_{q-1},\ -b_1,\cdots, -b_{q-2},\,-b_q\bigr)$$
        with $(m-1)$ components. If $\deg \beta_2 >(m-1)$, then we are done. So suppose $\deg \beta_2 \leq (m-1)$. We divide our discussion into two cases: $\beta_2$ is primitive and otherwise.
        \begin{itemize}
          \item[(a)] First, suppose that $\beta_2$ is primitive. Since $\deg \beta_2 < m$, we find that $b_q=1$ and $b_{q-2}\leq 2$ by a same argument for $\beta_1$ in step 2. As a result $2\geq b_{q-2}\geq b_{q-1}=D>1$, and thus $b_{q-2}=b_{q-1}=D=2$. By the same argument for $\beta_1$, we deduce that $b_1=\cdots=b_{q-1}=D=2$ and $a_1,\cdots,a_{p-1}$ are even. In particular, $a_1-b_q=a_1-1>0$. Hence
                $$\widehat{\alpha}:=\bigl(a_1-b_q,a_2,\cdots,a_p,\ -b_1,\cdots,-b_{q-1}\bigr)$$
              is a primitive residue vector with $(m-1)$ components.  Moreover,  $\deg \widehat{\alpha} = 2q-2\geq p+q-2=m>m-1$ and thus $\widehat{\alpha}$ forms a contraction of $\alpha$.
          \item[(b)] Second, if $\beta_2$ is not primitive. A similar argument as in the $\beta_1$ not primitive case gives that $b_q = E$ and $b_{q-2}=E$ or $2E$, where $E$ is the greatest common divisor of the components of $\beta_2$. By OA, $E=b_q\leq b_{q-1}$.
              If $D = b_{q-1}=b_q=E$, then $E|(a_p-b_{q-1})$ implying that $E$ divides all the components of $\alpha$, this contradicts the primitive property of $\alpha$.
           Hence, $E=b_q<b_{q-1}\leq b_{q-2}$. Also since $b_{q-2}=E\ {\rm or}\ 2E$,  we have $b_{q-2}=2E$ and $1\leq b_{q-2}/b_{q-1}<2$. Recall that $b_{q-1}=D$ and $D|b_{q-2}$, so $D=b_{q-1}=b_{q-2}=2E=2b_q$ and $E|a_p$, which gives the same
           contradiction as above.

        \end{itemize}
  \end{itemize}
\end{proof}

We shall prove Proposition \ref{prop:OneZero} by induction on $m$. To this end, we need two lemmas.

\begin{lem}
  \label{lem:Transitive}
  Let $\sigma$ and $\tau$ be two permutations in $S_d$ for some positive integer $d$ such that $\sigma$ is a cycle of length greater than $1$ and $\tau$ has form $\nu_1 \nu_2 \cdots \nu_r$,  where $\nu_i$'s are mutually disjoint cycles of length $d_i$ for $i=1,2, \cdots, r$ and $d_1 + d_2 +\cdots +d_r =d$. We call $\nu_j$'s {\bf cycle factors} of $\tau$. Then the following two conditions are equivalent{\rm:}
  \begin{itemize}
    \item[(i)] The subgroup $\langle \sigma, \tau \rangle$ of $S_d$ acts transitively on the set $\{1,2,\cdots,d\}$.
    \item[(ii)] Each cycle factor $\nu_i$ of $\tau$ intersects the cycle $\sigma$ in the sense that the subset of $\{1,2,\cdots,d\}$ associated with $\nu_i$ intersects that associated with the cycle $\sigma$.
  \end{itemize}
\end{lem}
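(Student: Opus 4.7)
The plan is to prove both directions by a direct orbit-level analysis of the action of $\langle \sigma,\tau\rangle$ on $\{1,2,\ldots,d\}$, using as the key geometric picture a hub-and-spoke graph whose hub is the support $S_\sigma$ of $\sigma$ and whose spokes are the associated subsets $S_1,\ldots,S_r$ of the cycle factors $\nu_1,\ldots,\nu_r$.

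For (i)$\Rightarrow$(ii) I would argue by contrapositive. Assume some $\nu_{i_0}$ has associated subset $S_{i_0}$ disjoint from $S_\sigma$. Then $\sigma$ fixes every element of $S_{i_0}$ pointwise (those elements lie outside $S_\sigma$), while $\tau$ preserves $S_{i_0}$ setwise (it restricts to $\nu_{i_0}$ there, and all other $\nu_j$ are supported off $S_{i_0}$). Hence both generators of $\langle\sigma,\tau\rangle$ stabilize $S_{i_0}$, so $S_{i_0}$ is a union of orbits. Since $|S_\sigma|\geq 2$ and $S_\sigma\cap S_{i_0}=\emptyset$, the set $S_{i_0}$ is a proper nonempty subset of $\{1,\ldots,d\}$, contradicting transitivity.

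For (ii)$\Rightarrow$(i) pick arbitrary $x,y\in\{1,\ldots,d\}$ and locate them in cycle factors, say $x\in S_i$ and $y\in S_j$. By hypothesis (ii) we can choose $z\in S_i\cap S_\sigma$ and $w\in S_j\cap S_\sigma$. The restriction of $\tau$ to $S_i$ is the single cycle $\nu_i$, which acts transitively on $S_i$, so some power of $\tau$ sends $x$ to $z$; similarly some power of $\sigma$ sends $z$ to $w$ since $\sigma$ is a single cycle acting transitively on $S_\sigma$; and finally some power of $\tau$ sends $w$ to $y$. Composing these three moves places $y$ in the $\langle\sigma,\tau\rangle$-orbit of $x$, which proves transitivity.

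There is no serious obstacle here; the argument is routine once the hub-and-spoke picture is set up. The only detail requiring care is the treatment of length-one cycle factors $\nu_i$ (fixed points of $\tau$), for which the ``transitive action on $S_i$'' step is vacuous and for which condition (ii) forces the single fixed point to lie in $S_\sigma$ directly; I would note this explicitly so the write-up covers all cases uniformly.
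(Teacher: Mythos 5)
Your proof is correct and follows essentially the same route as the paper's: the forward direction by showing the subset associated with a non-intersecting cycle factor is invariant under both generators (hence a proper nonempty union of orbits), and the converse by chaining powers of $\tau$ and $\sigma$ through the chosen intersection points. Your explicit remark about length-one cycle factors is a reasonable extra precaution but does not change the argument.
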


\begin{proof}
  (i) $\Rightarrow$ (ii) We prove the intersection by contradiction. Suppose that there exists a cycle factor $\nu_i$ of $\tau$ not intersecting $\sigma$. Then $\nu_i$ is a cycle factor of $\tau \sigma$. Therefore, the subset associated with $\nu_i$ forms an orbit under the action of the subgroup $\langle \tau, \sigma \rangle$ of $S_d$ on the set $\{1,2,\cdots, d\}$. Since this action is transitive, the cycle $\nu_i$ has  length $d$. Hence, $\nu_i$ must intersect $\sigma$, contradiction!

  (ii) $\Rightarrow$ (i) For each $1\leq i\leq r$, we choose a number $x_i$ lying in both $\nu_i$ and $\sigma$. Consider the action of the subgroup $\langle \tau, \sigma\rangle$ on the set $\{1,\cdots, d\}$. Then $x_1,\cdots, x_r$ lie in the same orbit of this action by assumption. Moreover, for each $1\leq i\leq r$, $x_i$ belongs to the same orbit with any other numbers in the cycle $\nu_i$, and the numbers in $\nu_1\cdots\nu_r$ exhaust all the numbers in $\{1,\cdots, d\}$. Therefore, the action has a single orbit.
\end{proof}

\begin{rem}
\label{rem:l>1}
  We consider a more general case of the part of (i)$\Rightarrow$(ii) in the above lemma by replacing $\sigma$ by $\sigma_1, \sigma_2, \cdots, \sigma_l$, all of which are cycles in $S_d$ of length greater than $1$. Suppose that the subgroup $\langle \sigma_1, \sigma_2, \cdots, \sigma_l, \tau \rangle$ of $S_d$ acts transitively on the set $\{1,2,\cdots,d\}$. Then it follows from a similar argument as in the preceding proof that each cycle factor of $\tau$ must intersect some $\sigma_k$ for $1\leq k\leq l$. However, for $l\geq 2$, the converse fails in general. For example, the subgroup of $S_4$ generated by $(12), (34)$ and $(12)(34)$ acts on the set $\{1,2,3,4\}$ with the two orbits of $\{1,2\}$ and $\{3,4\}$.
\end{rem}

\begin{lem}
\label{lem:NotEmp}
  Under the assumptions of Proposition \ref{prop:OneZero}, suppose that there exist permutations $\tau_1, \tau_2, \sigma_1$ in $S_d$ such that $\tau_1\tau_2\sigma_1=e$ and they have types of $a_1^1a_2^1\cdots a_p^1$, $b_1^1b_2^1\cdots b_q^1$ and  $(1+m)^1 1^{d-m-1}$, respectively. Then the subgroup $\langle \tau_1, \tau_2, \sigma_1\rangle$ acts transitively on the set $\{1,2,\cdots,d\}$ if and only if each cycle factor of $\tau_1$ and $\tau_2$ intersects the $(m+1)$-cycle $\sigma_1$.
\end{lem}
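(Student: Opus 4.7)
The plan is to reduce Lemma \ref{lem:NotEmp} to the same style of argument used in Lemma \ref{lem:Transitive}, exploiting the relation $\tau_1\tau_2\sigma_1 = e$ to transfer invariance between the three permutations. Let $S\subseteq\{1,2,\cdots,d\}$ denote the support of the unique nontrivial cycle of $\sigma_1$, so $|S|=m+1$ and every point outside $S$ is fixed by $\sigma_1$.

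For the direction $(\Rightarrow)$, I would argue by contradiction along the lines of (i)$\Rightarrow$(ii) in Lemma \ref{lem:Transitive}. Suppose some cycle factor $\nu$ of $\tau_1$ has support $T$ with $T\cap S=\emptyset$. Then $T$ is invariant under $\tau_1$ by construction, and invariant under $\sigma_1$ because $\sigma_1$ restricts to the identity off $S$. From $\tau_1\tau_2\sigma_1=e$ I obtain $\tau_2=\tau_1^{-1}\sigma_1^{-1}$, and hence $\tau_2(T)=\tau_1^{-1}(\sigma_1^{-1}(T))=\tau_1^{-1}(T)=T$. Thus the proper nonempty subset $T$ is stabilized by $\langle\tau_1,\tau_2,\sigma_1\rangle$, contradicting transitivity. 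The same reasoning, with the roles of $\tau_1$ and $\tau_2$ swapped (using $\tau_1=\sigma_1^{-1}\tau_2^{-1}$), handles the cycle factors of $\tau_2$.

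For $(\Leftarrow)$, I would proceed analogously to (ii)$\Rightarrow$(i) of Lemma \ref{lem:Transitive}. Since $\sigma_1$ restricted to $S$ is a single $(m+1)$-cycle, all points of $S$ lie in one orbit $\mathcal{O}$ of $\langle\tau_1,\tau_2,\sigma_1\rangle$. For any $x\in\{1,2,\cdots,d\}$, let $\nu$ be the cycle factor of $\tau_1$ containing $x$ (this includes the case that $x$ is a fixed point of $\tau_1$, where $\nu=(x)$). By hypothesis $\nu$ meets $S$, so $x$ and some $y\in S$ belong to a common $\langle\tau_1\rangle$-orbit, hence $x\in\mathcal{O}$. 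Consequently $\mathcal{O}=\{1,2,\cdots,d\}$.

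The lemma is essentially a re-packaging of Lemma \ref{lem:Transitive} for the present setting, so there is no substantial obstacle; the only delicate point is the observation that, because the relation $\tau_1\tau_2\sigma_1=e$ expresses any one of the three permutations in terms of the other two, invariance of a subset under two of them automatically implies invariance under the third. This is what allows us to drop the hypothesis on cycle factors of $\sigma_1$ and require intersection only for those of $\tau_1$ and $\tau_2$.
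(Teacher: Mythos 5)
Your proof is correct and rests on the same key observation as the paper's: the relation $\tau_1\tau_2\sigma_1=e$ lets any one generator be expressed through the other two, so the problem reduces to the two-generator criterion of Lemma \ref{lem:Transitive}. The paper simply records this as the subgroup identity $\langle \tau_1,\sigma_1\rangle=\langle \tau_1,\tau_2,\sigma_1\rangle=\langle \tau_2,\sigma_1\rangle$ and cites Lemma \ref{lem:Transitive}, whereas you inline that lemma's orbit and invariant-set arguments; the content is the same.
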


\begin{proof}
  Since $\tau_1\tau_2\sigma_1=e$,  the following three subgroups coincide with each other:
    $$\langle \tau_1, \sigma_1 \rangle = \langle \tau_1, \tau_2, \sigma_1 \rangle = \langle \tau_2, \sigma_1 \rangle.$$
  The result follows from Lemma \ref{lem:Transitive}.
\end{proof}

Now we give the proof of Proposition \ref{prop:OneZero}.

\begin{proof}  %By Lemma \ref{lem:prim}, we may assume that $\alpha$ is primitive.
  We argue by induction on $m=p+q-2\geq 0$. It holds trivially as $m=0$, which is equivalent to $p=q=1$. Assume $p+q>2$ in what follows. By Lemma \ref{lem:contr}, there exists a contraction $\widehat{\alpha}$ of $\alpha$. Without loss of generality, we assume that the contraction $\widehat{\alpha}$ has the form
     $$\widehat{\alpha}=\bigl(a_1,\cdots, a_{p-1}, a_p-b_q,\ -b_1, \cdots, -b_{q-1}\bigr).$$
  By the induction hypothesis, there exist in $S_{d-b_q}$ a permutation $\tau_2=\nu_1 \nu_2 \cdots \nu_{q-1}$ of type $b_1^1 b_2^1\cdots b_{q-1}^1$ and a cycle $\sigma_1$ of length $(p+q-2)$ such that  $\tau_2\sigma_1=\nu_1 \nu_2 \cdots \nu_{q-1} \sigma_1$ has type of $a_1^1\cdots a_{p-1}^1 (a_p-b_q)^1$ and the subgroup generated by $\tau_2$ and $\sigma_1$ acts transitively on the set $\{1,2,\cdots, d-b_p\}$, where $\nu_j$ is a cycle factor of length $b_j$ of $\tau_2$ for all $j=1, 2, \cdots, q-1$. In addition, by the induction hypothesis, $\tau_2\sigma_1$ has the form
  \begin{equation*}
    \nu_1 \nu_2 \cdots \nu_{q-1} \sigma_1=\mu_1 \mu_2 \cdots \mu_p
  \end{equation*}
  where $\mu_i$'s are mutually disjoint cycles for $1\leq i\leq p$, the length of $\mu_p$ is  $(a_p-b_q)$ and $\mu_k$ has the length $a_k$ for $1\leq k\leq p-1$.

  By Lemma \ref{lem:NotEmp}, we can choose an integer $1\leq x\leq (d-b_q)$ lying in both $\mu_p$ and $\sigma_1$. Choose in $S_d$ a cycle $\nu_q$ of length $b_q$ such that $\nu_q$ does not intersect $\nu_j$ for all $j=1,\cdots,q-1$, for example $\nu_q = (d,d-1,\cdots,d-b_q+1)$ and pick an integer $y$ in  $\nu_q$. Then $\nu_1 \nu_2 \cdots \nu_{q-1} \nu_q$ has type $b_1^1 b_2^1\cdots b_q^1$ and $\widetilde{\sigma_1}:=\sigma_1 (x,y)$ is a cycle of length $(p+q-1)$. The subgroup $\langle \nu_1 \nu_2 \cdots \nu_{q-1} \nu_q, \widetilde{\sigma_1} \rangle$ of $S_d$ acts transitively on the set $\{1,2,\cdots, d\}$ by Lemma \ref{lem:NotEmp}. We also observe that $\tilde{\mu}_p:=\nu_q\,\mu_p\, (x,y)$ is a cycle of length $a_p$ since $\nu_q$ does not intersect $\mu_p$ and $x$ and $y$ lie in $\mu_p$ and $\nu_q$, respectively. Moreover, since $\nu_q$ does not intersect $\mu_i$ for all $1\leq i\leq p-1$, $\tilde{\mu}_p$ does not intersect $\mu_i$ for all $1\leq i\leq p-1$. So we have
  \begin{align*}
     \bigl(\nu_1 \nu_2 \cdots \nu_{q-1} \nu_q\bigr)\, \widetilde{\sigma_1}&=\nu_q \bigl(\nu_1 \nu_2 \cdots \nu_{q-1} \sigma_1\bigr) (x,y)
                                                          =\nu_q \bigl(\mu_1 \mu_2 \cdots \mu_p\bigr) (x,y)\\
                                                          &=\bigl(\mu_1 \mu_2 \cdots \mu_{p-1}\bigr) \bigl(\nu_q \mu_p (x,y)\bigr)
                                                          =\mu_1 \mu_2 \cdots \mu_{p-1}\tilde{\mu}_p.
  \end{align*}
  Hence, we see that the three permutations $\bigl(\mu_1 \mu_2 \cdots \mu_{p-1}\tilde{\mu}_p\bigr)^{-1}$, $ \nu_1 \nu_2 \cdots \nu_{q-1} \nu_q$ and $\widetilde{\sigma_1}$ in $S_d$ satisfy the three properties listed in Proposition \ref{prop:OneZero}.
\end{proof}

\subsection{Branch data with more than three partitions}
\label{subsec:>3}

We prove Case $l\geq 2$ of Theorem \ref{thm:per}. At first we deal with residue vectors with components only $\pm 1$.

\begin{prop}
\label{prop:1-Res}
  Let $D$ be a positive integer. Assume $\alpha=(\underbrace{1, 1, \cdots, 1}_{D+1},\underbrace{-1,-1,\cdots, -1}_{D+1})$. Then for each partition $\lambda=(m_1,m_2,\cdots, m_l) $ of $2D$ such that ${\rm wt}(\lambda)<\deg\,\alpha=D+1$,  there exist $l$ permutations $\sigma_1,\cdots,\sigma_l$ in $S_{D+1}$ such that the following properties hold
   \begin{itemize}
     \item $\sigma_1\cdots \sigma_l=e$;
     \item $\sigma_j$'s are cycles of length $(m_j+1)$;
     \item $\langle \sigma_1,\cdots, \sigma_l \rangle$ acts transitively on the set $\{1,\cdots, D+1\}$.
   \end{itemize}
\end{prop}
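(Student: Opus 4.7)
The plan is to prove Proposition \ref{prop:1-Res} by induction on $l$, using Proposition \ref{prop:OneZero} (the $l=1$ case of Theorem \ref{thm:per}) as the engine for the step $l=3$. The case $l=2$ is immediate: the hypotheses force $m_1=m_2=D$, and one takes $\sigma_1=(1,2,\ldots,D+1)$ together with $\sigma_2=\sigma_1^{-1}$, which have product $e$ and already generate a transitive subgroup.

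For $l=3$, after relabeling so that $m_1\ge m_2$, I apply Proposition \ref{prop:OneZero} to the two partitions $(a_1,\ldots,a_p)=(1,\ldots,1,m_1+1)$ with $p=D-m_1+1$ and $(b_1,\ldots,b_q)=(m_2+1,1,\ldots,1)$ with $q=D-m_2+1$. A direct count gives $p+q-2=m_3$, and the associated residue vector $\alpha$ is primitive of degree $D+1$, so the hypothesis $m_3\le D$ of Proposition \ref{prop:1-Res} translates exactly into $\deg\alpha>m_3$. The output is a triple $\tau_1,\tau_2,\sigma_1$ in $S_{D+1}$ with product $e$ and transitive action, whose cycle types $(m_j+1,1^{D-m_j})$ for $j=1,2,3$ each describe a single $(m_j+1)$-cycle, which is exactly what is needed.

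For the inductive step $l\ge 4$, order $m_1\le m_2\le\cdots\le m_l$. I first observe $m_1+m_2\le D$: were this false, every pair of distinct parts would sum to at least $D+1$, forcing $2D=\sum_i m_i\ge(m_1+m_2)+(m_3+m_4)\ge 2(D+1)$, a contradiction. Hence $(m_1+m_2,m_3,\ldots,m_l)$ is again a partition of $2D$ into $l-1$ parts with maximum at most $D$, and the inductive hypothesis yields cycles $\sigma',\sigma_3,\ldots,\sigma_l$ in $S_{D+1}$ of the prescribed lengths, with product $e$ and transitive action. Writing $\sigma'=(x_1,\ldots,x_{m_1+m_2+1})$ and setting $\sigma_1=(x_1,\ldots,x_{m_1+1})$, $\sigma_2=(x_{m_1+1},\ldots,x_{m_1+m_2+1})$, a direct check gives $\sigma_1\sigma_2=\sigma'$ (they share only the point $x_{m_1+1}$). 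Therefore $\sigma_1\sigma_2\sigma_3\cdots\sigma_l=e$, and transitivity is preserved since $\sigma'\in\langle\sigma_1,\sigma_2\rangle$ forces $\langle\sigma_1,\sigma_2,\sigma_3,\ldots,\sigma_l\rangle$ to contain the original transitive subgroup $\langle\sigma',\sigma_3,\ldots,\sigma_l\rangle$.

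The main obstacle sits squarely at $l=3$: the pair-count argument above shows that the inequality $m_1+m_2>D$ can occur \emph{only} in that regime, and then the merge-and-split reduction is unavailable because the combined part would exceed the allowed weight. Invoking Proposition \ref{prop:OneZero} on a tailor-made residue vector circumvents this obstacle by reducing the realization problem at $l=3$ to Boccara's three-partition theorem already proved in Subsection \ref{subsec:three}.
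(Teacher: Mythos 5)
Your proof is correct, but it takes a genuinely different route from the paper's in both of its non-trivial cases. The base case $l=2$ is identical. For $l=3$, the paper writes down two explicit cycles $\sigma_1=(1,\ldots,m_1+1)$ and a carefully interleaved $\sigma_2$, and verifies by direct computation that $\sigma_1\sigma_2$ is a single cycle of length $m_3+1$; you instead invoke Proposition \ref{prop:OneZero} applied to the residue vector built from the partitions $(1,\ldots,1,m_1+1)$ and $(m_2+1,1,\ldots,1)$ of $D+1$, noting that $p+q-2=m_3$ and $\deg\alpha=D+1>m_3$. This is legitimate and non-circular, since Proposition \ref{prop:OneZero} is established independently in Subsection \ref{subsec:three}; it trades the paper's elementary but fiddly cycle bookkeeping for the heavier contraction machinery already in place, and it cleanly isolates $l=3$ as the only case where the two largest parts can jointly exceed $D$. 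For $l\geq 4$, the paper picks the block index $r$ with $m_1+\cdots+m_{r-1}\leq D<m_1+\cdots+m_r$ and reduces in one step to its three-component case, whereas you induct on $l$ by merging the two smallest parts, justified by the observation that $m_1+m_2\leq D$ whenever $l\geq 4$ (otherwise $2D\geq(m_1+m_2)+(m_3+m_4)\geq 2D+2$); both arguments rest on the same mechanism that two cycles sharing exactly one point multiply to a single longer cycle, so transitivity and the product condition are preserved. Your version is arguably cleaner to state, at the cost of importing Boccara's three-partition result into what the paper keeps as a self-contained combinatorial construction.
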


\begin{proof}
  It is easy to see that $l\geq 2$. We divide the proof by considering three cases.
  \begin{itemize}
    \item[Case 1] If $l=2$, we know that $m_1=m_2=D$ since $2D=m_1+m_2$ and $m_1,\, m_2\leq D$. Then we are done by choosing
       \begin{align*}
         \sigma_1=(1,2,\cdots,D+1),\quad \sigma_2=\sigma^{-1}_1.
       \end{align*}
    \item[Case 2] If $l=3$. Since $m_1,\,m_2,\,m_3\leq D$ and $m_1+m_2+m_3=2D$, We have $m_1+m_2\geq D$. Choosing
       \begin{align*}
         \sigma_1&=(1,2,\cdots,m_1+1)\quad {\rm and}\\
         \sigma_2&=
         (1,\underbrace{m_1+1,m_1,m_1-1,\cdots,m_1+m_3-D+2}_{m_1+m_2-D},\underbrace{m_1+2,m_1+3,\cdots,D+1}_{D-m_1}),
       \end{align*}
      we obtain
      \begin{displaymath}
        \sigma_1\sigma_2= \left\{
         \begin{array}{l}
            (\underbrace{m_1+2,m_1+3,\cdots,D+1}_{D-m_1},\underbrace{2,3,\cdots,m_1+m_3-D+2}_{m_1+m_3-D+1}), \quad {\rm if } \quad m_3<D \\
            (\underbrace{m_1+2,m_1+3,\cdots,D+1}_{D-m_1},\underbrace{2,3,\cdots,m_1+1, 1}_{m_1+1}), \qquad {\rm if } \quad m_3=D
         \end{array}\right.
      \end{displaymath}
      Then the permutations $\sigma_1,\sigma_2$ and $\bigl(\sigma_1\sigma_2\bigr)^{-1}$
      satisfy the three properties.
    \item[Case 3] Suppose $l>3$. Since $m_1,\cdots, m_l\leq D$, we can choose $1 < r \leq l$ such that $m_1+m_2+\cdots+m_{r-1}\leq D$ and $m_1+m_2+\cdots+m_r>D$. \\

        Subcase 3.1 Suppose that $r<l$. Choosing
       \begin{align*}
         \sigma_1&=(1,2,\cdots,m_1+1),\\
         \sigma_2&=(m_1+1,m_1+2,\cdots,m_1+m_2+1),\\
                 &\cdots\\
         \sigma_{r-1}&=(m_1+\cdots+m_{r-2}+1,\cdots,m_1+\cdots+m_{r-1}+1),
       \end{align*}
      we obtain
       \begin{align*}
         \tau_1:=\sigma_1\sigma_2\cdots\sigma_{r-1}=(1,2,3,\cdots,m_1+\cdots+m_{r-1}+1)
       \end{align*}
      By Case 2, there exist two cycles $\tau_2$ and $\tau_3$ which have length
       $1+m_r$ and $m_{r+1}+\cdots+m_l+1<D+1$, respectively, such that $\tau_1\tau_2\tau_3=e$. As the construction of $\sigma_1,\cdots,\sigma_{r-1}$, we can find $\sigma_{r+1},\sigma_{r+2},\cdots, \sigma_l$ directly such that $\sigma_j$ has the type of $(1+m_j)^11^{D-m_j}$ for $r+1\leq j \leq l$ and $\sigma_{r+1}\cdots\sigma_l=\tau_3$. Therefore the $l$ cycles of  $\sigma_1,\, \cdots, \sigma_{r-1},\,\sigma_r:=\tau_2,\,\sigma_{r+1},\cdots, \sigma_l$ satisfy the three properties.\\

       Subcase 3.2  Suppose $r=l>3$. Since $m_1+\cdots+m_l=2D$ and $\max\,\bigl(m_1,\cdots, m_l\bigr)\leq D$, we have $m_1+\cdots+m_{l-1}=m_l=D$. Then the problem can be reduced to Case 1 by a similar argument as above.
  \end{itemize}
\end{proof}

To complete the proof of Theorem \ref{thm:per}, we need the following lemma and its two corollaries.

\begin{lem}
  \label{lem:OneOrbit}
  Let $\Gamma$ be a subgroup of $S_d=S_{\{1,2,\cdots, d\}}$ for some integer $d>1$ and $\theta \in S_d$ be a cycle of length greater than $1$. Assume that the subgroup $G$ generated by $\Gamma$ and $\theta$ acts transitively on the set $\{1,2,\cdots,d\}$. Then, for each number $1 \leq x \leq d$ not contained in $\theta$, the $\Gamma$-orbit $\Gamma x$ of $x$ intersects $\theta$.
\end{lem}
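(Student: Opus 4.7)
The plan is to argue by contradiction. Suppose some $x \in \{1,\ldots,d\}$ not contained in the cycle $\theta$ has $\Gamma$-orbit $\Gamma x$ disjoint from the support $S$ of $\theta$ (where $S$ denotes the set of integers actually moved by $\theta$). Under this hypothesis, every element of $\Gamma x$ is a fixed point of both $\theta$ and $\theta^{-1}$. I will show that this forces the full $G$-orbit $Gx$ to collapse into $\Gamma x$, which contradicts the transitivity of $G$ because $S$ is nonempty and lies outside $\Gamma x$.

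The key reduction is that any element of $Gx$ can be written as $w\cdot x$ for some word $w = g_1 g_2 \cdots g_n$ with each $g_i \in \Gamma \cup \{\theta, \theta^{-1}\}$. Setting $y_i = g_i g_{i+1} \cdots g_n x$ (so $y_{n+1}=x$ and $y_1 = wx$), I plan a downward induction showing that $y_i \in \Gamma x$ for every $i$. The base case holds since $x \in \Gamma x$. For the inductive step, if $g_i \in \Gamma$, then $y_i = g_i y_{i+1} \in \Gamma \cdot \Gamma x = \Gamma x$; and if $g_i \in \{\theta, \theta^{-1}\}$, then because $y_{i+1} \in \Gamma x$ and $\Gamma x \cap S = \emptyset$, the permutation $g_i$ fixes $y_{i+1}$, so $y_i = y_{i+1} \in \Gamma x$.

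This induction yields $Gx \subseteq \Gamma x$. But $\theta$ is a cycle of length greater than $1$, so $S \neq \emptyset$, and by assumption $S \cap \Gamma x = \emptyset$; hence $Gx \subseteq \Gamma x \subsetneq \{1,\ldots,d\}$, contradicting the transitivity of $G$.

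There is no real obstacle here; the only point requiring care is distinguishing the support of $\theta$ from the whole underlying set, and making sure the induction handles the $\theta^{\pm 1}$ steps correctly, which works precisely because fixed points of $\theta$ are also fixed points of $\theta^{-1}$.
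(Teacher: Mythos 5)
Your proof is correct and follows essentially the same route as the paper: both argue by contradiction, write an arbitrary element of $G$ as a word in $\theta^{\pm 1}$ and elements of $\Gamma$, and show the $\theta$-factors act trivially on points of $\Gamma x$, so that $Gx\subseteq\Gamma x$ contradicts transitivity. Your explicit downward induction merely spells out the step the paper compresses into ``$\xi(x)=\xi'(x)$,'' and your handling of $\theta^{-1}$ is a minor extra care the paper leaves implicit.
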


\begin{proof}
  We argue by contradiction. Suppose that the orbit $\Gamma x$ does not intersect $\theta$. Take an arbitrary permutation $\xi$ in $G$. We can express it as
    $$\xi = \pi_1\pi_2 \cdots \pi_s$$
  where either $\pi_i=\theta$ or $\pi_i \in \Gamma$. Let $\xi'$ be the permutation obtained from the product $\pi_1\pi_2 \cdots \pi_s$ by removing all those $\pi_i$'s satisfying $\pi_i=\theta$. Then $\xi' \in \Gamma$. Since each number not contained in $\theta$ is a fixed point of $\theta$, by the hypothesis of the contradiction argument, we find that $\xi(x)=\xi'(x)$ is not contained in $\theta$. Since $\xi \in G$ has been chosen arbitrarily, the orbit $Gx$ does not intersect $\theta$, which contradicts that $G$ acts transitively on $\{1,2,\cdots, d\}$.
\end{proof}

As an application of the above lemma, we have

\begin{cor}
  \label{cor:SubgroupTran-A}
  Let $\gamma_1, \gamma_2, \cdots, \gamma_l, \theta$ be $(l+1)$ permutations in $S_{d-1}$ for some integer $d>2$ and $\theta = (x_1, x_2, \cdots, x_n)$ be a cycle in $S_{d-1}$ of length $n>1$. Suppose that the subgroup $\langle \gamma_1, \gamma_2, \cdots, \gamma_l, \theta \rangle$ acts transitively on the set $\{1, 2, \cdots, d-1 \}$.  Then so does the subgroup $\langle \gamma_1, \gamma_2, \cdots, \gamma_l, \widetilde{\theta} \rangle$ on $\{1, 2, \cdots, d\}$, where $\widetilde{\theta}:=(x_1, \cdots, x_n, d)$ is a cycle of length $(n+1)$ in $S_d$.
\end{cor}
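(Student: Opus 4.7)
Proof proposal for Corollary \ref{cor:SubgroupTran-A}.

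Write $\Gamma:=\langle \gamma_1,\ldots,\gamma_l\rangle$ and $G:=\langle \gamma_1,\ldots,\gamma_l,\widetilde{\theta}\rangle=\langle \Gamma,\widetilde{\theta}\rangle$, both viewed inside $S_d$. The plan is to show that every element of $\{1,2,\ldots,d\}$ lies in the same $G$-orbit as $x_1$, by separating the elements into three types: (i) the new point $d$, (ii) the points $x_1,\ldots,x_n$ that were moved by $\theta$, and (iii) the remaining points of $\{1,\ldots,d-1\}$ that were fixed by $\theta$.

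For type (i) the identity $\widetilde{\theta}(x_n)=d$ shows that $d$ belongs to the $G$-orbit of $x_n$. For type (ii), applying successive powers of $\widetilde{\theta}$ shows that $x_1,x_2,\ldots,x_n,d$ are all $G$-equivalent, so every $x_i$ lies in the $G$-orbit of $x_1$. So the whole question reduces to type (iii): given $y\in\{1,\ldots,d-1\}\setminus\{x_1,\ldots,x_n\}$, one must check that $y$ is $G$-equivalent to some $x_i$.

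The crucial trick is to apply Lemma \ref{lem:OneOrbit} in the original $S_{d-1}$ setting, using the subgroup $\Gamma$ and the cycle $\theta=(x_1,\ldots,x_n)$. By hypothesis $\langle \Gamma,\theta\rangle$ is transitive on $\{1,\ldots,d-1\}$, and by assumption $y$ is not contained in $\theta$, so the lemma yields some $\gamma\in \Gamma$ with $\gamma(y)\in\{x_1,\ldots,x_n\}$. Since $\gamma\in \Gamma\subset G$, this places $y$ in the $G$-orbit of some $x_i$, and we are done.

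The statement is quite natural but the one place where care is needed is noticing that Lemma \ref{lem:OneOrbit} can be invoked with $\theta$ (not $\widetilde{\theta}$) and with $\Gamma$ alone (not $G$); once that is spotted, the argument is essentially a one-line orbit chase, so I do not anticipate any real obstacle. The only subtlety to verify is the benign fact that each $\gamma_i$, being in $S_{d-1}$, fixes the new point $d$ when regarded as an element of $S_d$, so no $G$-orbit of a point in $\{1,\ldots,d-1\}$ accidentally fails to meet $\{x_1,\ldots,x_n\}$ because of $d$.
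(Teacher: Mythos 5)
Your proposal is correct and follows essentially the same route as the paper: both invoke Lemma \ref{lem:OneOrbit} with $\Gamma=\langle\gamma_1,\ldots,\gamma_l\rangle$ and the original cycle $\theta$ in $S_{d-1}$ to send every point outside $\theta$ into $\theta$, and then use $\widetilde{\theta}$ to link $x_1,\ldots,x_n$ and $d$ into a single orbit. Your write-up merely spells out the orbit chase that the paper leaves implicit.
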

\begin{proof}
  By Lemma \ref{lem:OneOrbit}, we can see that for each number $1 \leq x \leq d-1$ not contained in $\theta$, there exists $\gamma \in \Gamma:=\langle \gamma_1, \cdots, \gamma_l \rangle$ such that $\gamma(x)$ is contained in $\theta$. Hence, the action of $\langle \gamma_1, \gamma_2, \cdots, \gamma_l, \widetilde{\theta} \rangle$ on $\{1,2\cdots,d\}$ has only one orbit.
\end{proof}

Similarly, we obtain

\begin{cor}
  \label{cor:SubgroupTran-B}
  Let $\gamma_1, \gamma_2, \cdots, \gamma_l, \theta$ be permutations in $S_d$ for some integer $d>2$ and $\theta = (x_1, x_2, \cdots, x_n)$ be a cycle of length $1<n<d$. Suppose that the subgroup $\langle \gamma_1, \gamma_2, \cdots, \gamma_l, \theta \rangle$ acts transitively on the set $\{1, 2, \cdots, d \}$.  Then so does the subgroup $\langle \gamma_1, \gamma_2, \cdots, \gamma_l, \widetilde{\theta} \rangle$ of $S_d$ on the set $\{1, 2, \cdots, d\}$, where $\widetilde{\theta}=(x_1, \cdots, x_n, y) \in S_d$ is a cycle of length $(n+1)$ with $y\in \{1,2,\cdots,d\}\backslash \{x_1,\cdots, x_n\}$.
\end{cor}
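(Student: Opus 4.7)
The plan is to mimic the proof of Corollary \ref{cor:SubgroupTran-A}, since the only structural difference is that here the extra point $y$ adjoined to the cycle already lies in $\{1,2,\ldots,d\}$ rather than being freshly appended to a smaller set. Write $\Gamma:=\langle\gamma_1,\ldots,\gamma_l\rangle$ and $\widetilde G:=\langle\Gamma,\widetilde\theta\rangle$, and let $S:=\{x_1,\ldots,x_n\}$ denote the support of $\theta$, so that the support of $\widetilde\theta$ is $S\cup\{y\}$.

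First I would use $\widetilde\theta$ alone to group together the points in its support: since $\widetilde\theta$ is a single cycle on $S\cup\{y\}$, the elements $x_1,\ldots,x_n,y$ all lie in a single $\widetilde G$-orbit. In particular, although $y$ was not reached by $\theta$, it now sits in the same $\widetilde G$-orbit as every $x_i$. Next I would pick an arbitrary $z\in\{1,2,\ldots,d\}\setminus S$. Since by hypothesis $\langle\Gamma,\theta\rangle$ acts transitively on $\{1,2,\ldots,d\}$ and $z$ is not contained in the cycle $\theta$, Lemma \ref{lem:OneOrbit} (applied to the subgroup $\Gamma$ and the cycle $\theta$) produces an element $\gamma\in\Gamma$ with $\gamma(z)\in S$. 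Because $\Gamma\subseteq\widetilde G$, this puts $z$ in the same $\widetilde G$-orbit as some $x_i$, and combining with the first step, $z$ lies in the $\widetilde G$-orbit of $x_1$. Since $z$ was arbitrary in $\{1,\ldots,d\}\setminus S$ and the points of $S\cup\{y\}$ are already in the orbit of $x_1$, the action of $\widetilde G$ on $\{1,2,\ldots,d\}$ is transitive.

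The proof is routine once Lemma \ref{lem:OneOrbit} is available, and I do not anticipate a substantive obstacle. The one point that deserves care is that Lemma \ref{lem:OneOrbit} must be invoked for the \emph{old} cycle $\theta$, whose transitivity we are given, and not for $\widetilde\theta$; the conclusion is then transferred to $\widetilde G$ via the inclusion $\Gamma\subseteq\widetilde G$ together with the fact that $\mathrm{supp}(\theta)\subseteq\mathrm{supp}(\widetilde\theta)$. This small bookkeeping matter is the only difference from the proof of Corollary \ref{cor:SubgroupTran-A}.
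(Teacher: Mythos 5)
Your argument is correct and is exactly the one the paper intends: the paper gives no separate proof for this corollary (it says ``Similarly, we obtain''), and the implied proof is the same application of Lemma \ref{lem:OneOrbit} to $\Gamma=\langle\gamma_1,\dots,\gamma_l\rangle$ and the old cycle $\theta$ that proves Corollary \ref{cor:SubgroupTran-A}. Your explicit bookkeeping --- that $y$ is absorbed into the orbit of the $x_i$ via $\widetilde\theta$ and that every $z$ outside the support of $\theta$ reaches that support via some $\gamma\in\Gamma\subseteq\widetilde G$ --- is exactly the right way to fill in the ``similarly.''
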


Now we arrive at proving the case $l\geq 2$ of Theorem \ref{thm:per}.

\begin{proof}
   By Lemma \ref{lem:prim} we could also assume that the residue vector $\alpha=(a_1,\cdots, a_p, -b_1,\cdots, -b_q)$ is primitive so that $\deg\,\alpha=d=a_1+\cdots+a_p=b_1+\cdots+b_q$.
   \begin{itemize}
     \item[\textbf{Part I}] Suppose $d \geq m+1= p+q-1$.
       By Proposition \ref{prop:OneZero} there exist $\tau_1, \tau_2$, $\sigma$ such that\\
        (1) $\tau _1 \tau _2 \sigma=e$;\\
        (2) $\tau _1$ has type of $a_1^1a_2^1\cdots a_p^1$, $\tau _2$  of $b_1^1b_2^1\cdots b_q^1$, $\sigma$ of $(1+m)^1 1^{d-m-1}$;\\
        (3) the subgroup $\langle \tau _1, \tau _2, \sigma\rangle$ acts transitively on $\{1,2,\cdots,d\}$.\\
       Assume that $\sigma=(1, 2, \cdots, m+1)$ for simplicity of notion. We are done by choosing
       \begin{align*}
         \sigma_1&= (1,2, \cdots, m_1+1),\\
         \sigma_2&= (m_1+1,m_1+2, \cdots, m_1+m_2+1),\\
         \cdots  &\cdots \\
         \sigma_l&=(m_1+\cdots+m_{l-1}+1,m_1+\cdots+m_{l-1}+2,\cdots, m_1+\cdots+m_l+1),
       \end{align*}

     \item[\textbf{Part II}] Suppose $d={\rm deg}\,\alpha \leq m = p+q-2$.  We first reduce the problem to the two cases that $l=2$ and $l=3$, then we prove these two cases by using the contraction argument and the induction argument similar as the proof of Proposition \ref{prop:OneZero}. The details given as follows form the left part of this section.

         By OA, we have that $d = \sum_{j=1}^q b_j \geq q \geq \frac{p+q}{2}$. Since $d={\rm deg}\,\alpha> {\rm wt}(\lambda)=\max\,\bigl(m_1,\cdots, m_l)$,  the partition $\lambda$ of $m=(p+q-2)$ has at least two components, i.e. $l>1$.

         At first we show that

         \nd {\sc Claim 1}: {\it the problem can be reduced to the two cases where the partitions of $m$ have two and three components, respectively.}

         \nd {\sc Proof of Claim 1}

         Suppose that Part II holds for each partition of $m$ which has  three components and has weight less than $d$. Then we shall prove that so does Part II for each partition $(m_1,\cdots, m_l)$ of $m$ such that $l>3$ and its weight is less than $d$. To this end, since $m_1,\cdots, m_l<d$, we can choose $1<r\leq l$ such that
           \[m_1+\cdots+m_{r-1}<d\quad {\rm and}\quad m_1+\cdots+m_r\geq d.\]
         We shall define a new partition, called $\lambda'$, with three components as following.

         \nd $\bullet$ Suppose $r<l$. Then we consider the partition $\lambda':= (m_1' , m_2' , m_3') $ of $m$, where $m_1',m_2',m_3'$ are defined by
           $$m_1':=m_1+ \cdots + m_{r-1}<d,\quad m_2':=m_r<d,\quad m_3':=m_{r+1}+ \cdots +m_l.$$
         Moreover, since $d\geq \frac{p+q}{2}$, we observe that
         \begin{align*}
            m_3'=p+q-2-(m_1'+m_2') < (p+q-d)-1 \leq \frac{p+q}{2}-1 \leq d-1.
         \end{align*}
         \nd $\bullet$ Suppose $r=l>3$. Then we choose the partition $\lambda':= \bigl(m_1+\cdots+m_{l-2}, m_{l-1} , m_l\bigr)$ of $m$, which has weight less than $d$.

         \nd  Since the partition $\lambda'$ has weight less than $d$, and we have assumed the validity of Case $l=3$ for $\alpha$ and $\lambda'$, we can find in $S_d$ the following five permutations $\tau_1, \tau_2, \sigma_1', \sigma_2', \sigma_3'$ which satisfy the three properties. By a similar construction as Case 3 in the proof of Proposition \ref{prop:1-Res}, we know that the proposition holds for the partition $(m_1,\cdots,m_l)$. Therefore, we have justified the claim.

         We always assume $l=2$ or $3$ in the left part of the proof.

         Recall that OA states that $1\leq p \leq q, a_1\leq a_2\leq \cdots \leq a_p,\, b_1\geq b_2\geq \cdots \geq b_q$. Without loss of generality, we further assume $m_1\leq m_2 \leq \cdots \leq m_l$ for the partition $\lambda$.  We call these two assumptions OA in the sequel by an abuse of notation.

         By OA, we can see that if $a_p=1$, then $a_1=a_2= \cdots =a_p = b_1=b_2=\cdots =b_q=1$ and we are done by Proposition \ref{prop:1-Res}. We may assume that $a_p > 1$ in the left part of the proof. Since $d \leq m$, we have $\sum\limits_{j=1}^q b_j \leq p+q-2 \leq 2q-2$. Then, we have $b_{q-1}=b_q=1$. Since $m_1\leq \cdots \leq m_l$ and $a_p>1$, $\deg \alpha > {\rm wt}(\lambda)$ and $l=2$ or $3$, we observe that $m_l$ is always greater than $1$ except when $\alpha=(2,1,-1,-1,-1)$ and $\lambda=(1,1,1)$, for which Part II holds trivially. We may assume that $m_l>1$ in the left part of the proof. In order to do induction on $m$, we choose the partition
           $$\lambda_1:=\bigl(m_1,\cdots,m_{l-1},(m_l -1)\bigr)$$  of $(m-1)$ and the residue vector $$\widehat{\alpha} =\bigl(a_1, a_2, \cdots, a_{p-1}, a_p-b_q, -b_1, -b_2, \cdots, -b_{q-1}\bigr)$$
         with $(p+q-1)=(m+1)$ components. Then, since $b_{q-1}=b_q=1$,  $\widehat{\alpha}$ is primitive and
           $${\rm deg}\,\widehat{\alpha}= -1+\deg\,\alpha.$$
         Then we make the following

         \nd {\sc Claim 2}:  ${\rm wt}(\lambda_1)\, < \,\deg\widehat{\alpha}$. Hence we may think of $\bigl(\widehat{\alpha},\,\lambda_1\bigr)$ as a contraction of $\bigl(\alpha,\,\lambda\bigr)$ and we shall use the former to do induction argument.

         \nd {\sc Proof of Claim 2}

         \begin{itemize}
           \item[(i)] If $m_1 \leq \cdots \leq m_{l-1}=m_l$, then ${\rm wt}(\lambda_1)=m_{l-1}=m_l$ and $2m_l\leq m =p+q-2 \leq 2q-2$, i.e. $m_l \leq q-1$. If $\deg\, \widehat{\alpha}>q-1$, then we are done. Assume $\deg \widehat{\alpha} = q-1$, which implies that $b_1=\cdots = b_q=1$ and $p < q$ since $a_p >1$. Then $2m_l\leq m =p+q-2 < 2q-2$ and $m_l < q-1$. Hence $\deg\,\widehat{\alpha}=q-1>m_l={\rm wt}(\lambda_1)$.
           \item[(ii)]  If $m_{l-1} < m_l$, then ${\rm wt}(\lambda_1)=m_l-1={\rm wt}(\lambda)-1 <\deg\,\alpha-1 =\deg\,\widehat{\alpha}$.
           The claim is proved.
         \end{itemize}

     \nd Then in the left part of the proof we use the induction on $m$ to prove that Part II of Theorem \ref{thm:per} holds provided that $l$ equals either $2$ or $3$. We observe that the initial case of $m=2$ holds trivially, where $\alpha=(1,1,-1,-1)$ and $\lambda=(1,1)$.
     \begin{itemize}
       \item Suppose $l=2$. We recall our setting as follows. Take a primitive residue vector $$\alpha=(a_1,\cdots, a_p,-b_1,\cdots,-b_q)$$ and a partition $\lambda=(m_1,\,m_2)$ of $m=p+q-2 \geq 3$  such that $d=\deg\, \alpha \leq m$, and $\alpha$ and $\lambda$ satisfy OA. Then we have $2 \leq m_2={\rm wt}(\lambda)<d$ and $b_{q-1}=b_q=1$.

           Then, by claim 2, we could take another primitive residue vector $$\widehat{\alpha}=(a_1, a_2, \cdots, a_{p-1}, a_p-b_q, -b_1, -b_2, \cdots, -b_{q-1})$$ and another partition $\lambda_1=(m_1,\,m_2-1)$ such that ${\rm wt}(\lambda_1)<\deg\,\widehat{\alpha}=d-1$. By the induction hypothesis, there exist in $S_{d-1}=S_{\{1,2,\cdots, d-1\}}$ a permutation of type $b_1^1\cdots b_{q-1}^1$, called $\tau_2$,  and two cycles $\sigma_1, \sigma_2$ of length $(1+m_1),\, m_2$, respectively, such that the subgroup $\langle \sigma_1, \sigma_2, \tau_2 \rangle$ of $S_{d-1}$ acts transitively on $\{1,2,\cdots, d-1\}$ and the permutation
           \begin{align*}
             \tau_1:=\tau_2\sigma_1\sigma_2
           \end{align*}
           has the type of $a_1^1\cdots a_{p-1}^1 (a_p-b_q)^1$. We re-express $\tau_1$ by $\tau_1=\mu_1\cdots \mu_p$ such that $\mu_j$'s are its cycle factors and $\mu_j$ has length $a_j$ for $1\leq j<p$ and $\mu_p$ has length $(a_p-b_q)$. Since $b_q=1$, we can think of $\tau_2$ as a permutation in $S_d$ with the type of $b_1^1\cdots b_{q-1}^1b_q^1$. By Remark \ref{rem:l>1},  the cycle factor $\mu_p$ intersects either $\sigma_1$ or $\sigma_2$. We shall divide the left part of the proof of Case $l=2$ into the following two steps.
         \begin{itemize}
           \item[Step 2.1] Suppose that $\sigma_2$ intersects $\mu_p$. Then we pick a number $x$ in both $\sigma_2$ and $\mu_p$ and define $\widetilde{\sigma}_2:=\sigma_2 (x,d)$ and $\widetilde{\mu}_p:=\mu_p (x,d)$. Then, since $b_q=1$,  $\widetilde{ \sigma }_2$ and $\widetilde{\mu}_p$ are cycles in $S_d$ of length $(1+m_2)$ and $a_p$, respectively. Moreover, we have
              \begin{align*}
                \tau_2 \sigma_1 \widetilde{ \sigma }_2 = \mu_1 \mu_2 \cdots \mu_{p-1} \widetilde{\mu}_p.
              \end{align*}
              Since $\langle \tau_2, \sigma_1, \sigma_2 \rangle$ acts transitively on the set $\{1,2,\cdots, d-1\}$,  the action of $\langle \tau_2, \sigma_1, \widetilde{\sigma}_2 \rangle$ on the set $\{1,2,\cdots,d\}$ has a single orbit by Corollary \ref{cor:SubgroupTran-A}.
           Therefore, the following four permutations $\bigl(\mu_1 \mu_2 \cdots \mu_{p-1} \widetilde{\mu}_p\bigr)^{-1},\, \tau_2,\, \sigma_1,\, \widetilde{ \sigma }_2$  satisfy the three properties.
          \item[Step 2.2] Suppose that $\sigma_2$ does not intersect $\mu_p$. Then $\sigma_1$ intersects $\mu_p$. Choose a number $x$ in both $\sigma_1$ and $\mu_p$. Then
              \begin{align*}
                \tau_2 \sigma_1 \sigma_2 (x,d) &=\mu_1 \cdots \mu_p (x,d)\\
                                              &=\mu_1 \cdots \mu_{p-1} \widetilde{\mu}_p \quad {\rm where}\quad
                                              \widetilde{\mu}_p:=\mu_p\,(x,d) \\
                                              &=: \widetilde{\tau}_1
              \end{align*}
              where $\widetilde{\tau}_1$ has the type of $a_1^1\cdots a_p^1$.
              Meanwhile, since $\sigma_2$ does not intersect $(x,d)$,  we have
              $$\widetilde{\tau}_1=\tau_2 \sigma_1 \sigma_2 (x,d)=\tau_2 \sigma_1 (x,d) \sigma_2 =: \tau_2 \sigma_1' \sigma_2 ,$$ where $\sigma_1':=\sigma_1\,(x,d)$ is a cycle of length $(m_1+2)$. Observe that $\sigma_1'$ intersects $\sigma_2$. For, otherwise, $\sigma_1$ does not intersect $\sigma_2$ and there are $(m_1+m_2+1)$ different numbers appearing in both $\sigma_1$ and $\sigma_2$, lying in $S_{d-1}$. Hence, we have
              $$(d-1)\geq 1+m_1+m_2=1+m\geq 1+d,$$
              contradiction!

              Take  the smallest positive integer $s$ such that $y:=\bigl(\sigma_1'\bigr)^{-s}(x)$ is contained in $\sigma_2$. Since $x$ is not contained in $\sigma_2$, by the minimal property of $s$, $\bigl(\sigma_1'\bigr)(y)$ is not contained in $\sigma_2$. We can rewrite $\sigma_1'$ as
              \begin{align*}
                \sigma_1'=(x_1,x_2,\cdots,x_{m_1},y,\sigma_1'(y))=\widetilde{\sigma}_1
                \bigl(y,\sigma_1'(y)\bigr)\quad {\rm with}\quad \widetilde{\sigma}_1:=(x_1,\cdots, x_{m_1},y).
              \end{align*}
              Then we have
              \begin{align*}
                \sigma_1'\sigma_2=
                \widetilde{\sigma}_1 \bigl(y,\sigma_1'(y)\bigr)
                \sigma_2
                =\widetilde{\sigma}_1 \widetilde{\sigma}_2\quad {\rm where}\quad
                \widetilde{\sigma}_2:= \bigl(y,\sigma_1'(y)\bigr)\sigma_2,
              \end{align*}
              and
              \begin{align*}
                \widetilde{\tau}_1=\tau_2\sigma_1'\sigma_2=
                \tau_2 \widetilde{\sigma}_1 \widetilde{\sigma}_2,
              \end{align*}
              where $\widetilde{\tau}_1$ and $\tau_2$ have types of $a_1^1 \cdots a_p^1$ and $b_1^1\cdots b_{q-1}^1 b_q^1$, respectively, and the two cycles $\widetilde{\sigma}_1$ and $\widetilde{\sigma}_2$ have lengths of $1+m_1$ and $1+m_2$, respectively. Since $\langle \tau_2, \widetilde{\tau}_1, \sigma_2 \rangle = \langle \tau_2, \sigma_1', \sigma_2 \rangle$ acts transitively on the set $\{1,2,\cdots, d\}$ by Corollary \ref{cor:SubgroupTran-A}, so does $\langle \tau_2, \widetilde{\tau}_1, \widetilde{\sigma}_2 \rangle$ on the same set by Corollary \ref{cor:SubgroupTran-B}. Therefore, the following four permutations $\bigl(\widetilde{\tau}_1 \bigr)^{-1},\,\tau_2,\,\widetilde{\sigma}_1,\, \widetilde{\sigma}_2$  satisfy the three properties.
        \end{itemize}
       \item Suppose $l=3$. We may further assume that $m_1+m_2\geq d$. Otherwise, replacing $\lambda$ by $\lambda'=(m_1+m_2,\, m_3)$ and using the similar reduction argument as above,  we can reduce the problem to the known case of $l=2$.

           By the induction hypothesis, there exist in $S_{d-1}$ permutation $\tau_2$ of type $b_1^1\cdots b_{q-1}^1$ and three cycles $\sigma_1, \sigma_2, \sigma_3$ of length $1+m_1, 1+m_2, m_3$, respectively, such that the subgroup $\langle \sigma_1, \sigma_2, \sigma_3, \tau_2 \rangle$ acts transitively on the set $\{1,2,\cdots, d-1\}$ and
           \begin{align*}
             \tau_2 \sigma_1 \sigma_2 \sigma_3= \tau_1=\mu_1\cdots\mu_p,
           \end{align*}
           where $\tau_1=\mu_1\cdots\mu_p$ has type $a_1^1\cdots a_{p-1}^1(a_p-b_q)^1$, $\mu_j$'s are the cycle factors of $\tau_1$ and $\mu_j$ has length $a_j$ for $1\leq j<p$ and $\mu_p$ has length $(a_p-b_q)=(a_p-1)$. Since $m_1+m_2\geq d$, by OA,  any two of the three cycles $\sigma_1,\sigma_2,\sigma_3$ in $S_{d-1}$ intersect since $1+m_i+m_j>d>d-1$ for $1\leq i\not=j\leq 3$. We divide the left part of the proof into the following three steps.
           \begin{itemize}
             \item[Step 3.1] If $\mu_p$ intersects $\sigma_3$, we are done by a similar argument as in step 2.1.
             \item[Step 3.2] If $\mu_p$ intersects $\sigma_2$ but does not intersect $\sigma_3$, the proof goes through as Step 2.2 since $\sigma_2$ intersects $\sigma_3$.
             \item[Step 3.3] Suppose that neither $\sigma_2$ nor $\sigma_3$ intersects $\mu_p$. Then, by Remark \ref{rem:l>1},  $\mu_p$ intersects $\sigma_1$ since $\langle \tau_1,\,\sigma_1,\,\sigma_2,\,\sigma_3\rangle$ acts transitively on the set $\{1,2,\cdots, d-1\}$. Choosing a number $x$ in both $\mu_p$ and $\sigma_1$ and denoting $\widetilde{\mu}_p:=\mu_p(x,\,d)$ and $\sigma_1':=\sigma_1(x,\, d)$, we obtain
                 \begin{align*}
                   \tau_2 \sigma_1'\sigma_2 \sigma_3 =\tau_2 \sigma_1\sigma_2 \sigma_3 (x,\, d)= \mu_1\cdots \mu_{p-1} \widetilde{\mu}_p =:\widetilde{\tau}_1
                 \end{align*}
                 where $\sigma_1'$ is a cycle of length $(m_1+2)$, $\widetilde{\mu}_p$ is a cycle factor of $\widetilde{\tau}_1$, and $\widetilde{\tau}_1$ has the type of $a_1^1\cdots a_{p-1}^1 a_p^1$.
                \begin{itemize}
                  \item[Step 3.3.A] Suppose that $\sigma_2$ contains a number which is not contained in $\sigma_3$. Then %using the argument of Subcase (2) of Case $l=2$ twice,
                    we could find in $S_d$ the three cycles of $\widetilde{\sigma}_1, \widetilde{\sigma}_2, \widetilde{\sigma}_3$ with length $1+m_1, 1+m_2, 1+m_3$, respectively, such that
                      \[\tau_2 \widetilde{\sigma}_1\widetilde{\sigma}_2 \widetilde{\sigma}_3 = \widetilde{\tau}_1\]
                      and  $\langle \tau_2, \widetilde{\tau}_1, \widetilde{\sigma}_1, \widetilde{\sigma}_3 \rangle$
                    acts transitively on $\{1, 2, \cdots, d\}$.  Indeed, rewrite $\sigma_1'$ as
                    \begin{align*}
                      \sigma_1'=\widetilde{\sigma}_1 (b_1,z_1)
                    \end{align*}
                    where $b_1$ lies in the intersection of $\sigma_1$ and $\sigma_2$, the number $z_1$ is contained in $\sigma_1'$ but not in  $\sigma_2$, and $\widetilde{\sigma}_1$ is a cycle of length $(1+m_1)$. Choose $\sigma_2':=(b_1,z_1)\sigma_2$ and rewrite it as
                    \begin{align*}
                      \sigma_2' = \widetilde{\sigma}_2 (b_2,z_2),
                    \end{align*}
                    where $b_2$ lies in both $\sigma_2$ and $\sigma_3$, the number  $z_2$ is contained in $\sigma_2$ but not in $\sigma_3$, $\widetilde{\sigma}_2$ is a $(1+m_2)$-cycle. Hence $\widetilde{\sigma}_3:= (b_2,z_2)\sigma_3$ is a $(1+m_3)$-cycle and the equality $\tau_2 \widetilde{\sigma}_1\widetilde{\sigma}_2 \widetilde{\sigma}_3 = \widetilde{\tau}_1$ holds. Then all the following subgroups 
                    \begin{align*}
                       \langle \tau_2, \sigma_1', \sigma_2, \sigma_3 \rangle = \langle \tau_2, \widetilde{\tau}_1, \sigma_2, \sigma_3 \rangle, \quad
                       \langle \tau_2, \widetilde{\tau}_1, \sigma_2', \sigma_3 \rangle = \langle \tau_2, \widetilde{\tau}_1, \widetilde{\sigma}_1, \sigma_3 \rangle
                    \end{align*}
                    and $\langle \tau_2, \widetilde{\tau}_1, \widetilde{\sigma}_1, \widetilde{\sigma}_3 \rangle$ of $S_d$
                    act transitively on $\{1, 2, \cdots, d\}$ by Corollaries \ref{cor:SubgroupTran-A} and \ref{cor:SubgroupTran-B}.
                    Therefore, the following permutations
                    $$\bigl(\widetilde{\tau}_1\bigr)^{-1},\,\tau_2,\, \widetilde{\sigma}_1,\, \widetilde{\sigma}_2,\, \widetilde{\sigma}_3$$
                     satisfy the three properties.

                 \item[Step 3.3.B]Suppose that every number in the cycle $\sigma_2$ is contained in $\sigma_3$. Rewrite $\sigma_1'$ as
                 \begin{align*}
                   \sigma_1' = \widetilde{\sigma}_1 (b,z)
                 \end{align*}
                 where $\widetilde{\sigma}_1$ is a $(1+m_1)$-cycle, $b$ lies in both $\sigma_1'$ and $\sigma_3$, $z$ is not contained in $\sigma_3$ and then it is not contained in $\sigma_2$, either.

                \nd $\bullet$ Suppose that $b$ is not contained in $\sigma_2$. Then
                \begin{align*}
                  \tau_2 \sigma_1' \sigma_2 \sigma_3 &= \tau_2 \widetilde{\sigma}_1 (b,\,z)\sigma_2 \sigma_3\\
                                                     &= \tau_2 \widetilde{\sigma}_1 \sigma_2 ((b,z)\sigma_3)\\
                                                     &= \tau_2 \widetilde{\sigma}_1 \sigma_2 \widetilde{\sigma}_3,
                \end{align*}
                where $\widetilde{\sigma}_3:=(b,z)\sigma_3$ is a $(1+m_3)$-cylce.
                By Corollary \ref{cor:SubgroupTran-B}, the subgroup $\langle \tau_2, \widetilde{\tau}_1, \sigma_2, \widetilde{\sigma}_3 \rangle$ acts transitively on the set $\{1,2,\cdots, d\}$ since the subgroup $\langle \tau_2, \widetilde{\tau}_1, \sigma_2, \sigma_3 \rangle =  \langle \tau_2, \sigma_1', \sigma_2, \sigma_3 \rangle$ has the same property. Therefore, the following five permutations $\bigl(\widetilde{\tau}_1\bigr)^{-1},\,\tau_2,\, \widetilde{\sigma}_1,\, \sigma_2,\, \widetilde{\sigma}_3$ satisfy the three properties.

                \nd $\bullet$ Suppose that $b$ is contained in $\sigma_2$. Then we have
                  $$\tau_2 \sigma_1' \sigma_2 \sigma_3 = \tau_2 \widetilde{\sigma}_1 ((b,\,z)\sigma_2) \sigma_3 = \tau_2 \widetilde{\sigma}_1 \sigma_2' \sigma_3\quad {\rm where}\quad \sigma_2':=(b,z)\sigma_2.$$
                Then applying a similar argument in Step 2.2 to $\sigma_2'$ and $\sigma_3$, we obtain the two cycles $\widetilde{\sigma}_2$ and $\widetilde{\sigma}_3$ with length of $(1+m_2),(1+m_3)$, respectively, such that
                  $$\sigma_2'\sigma_3=\widetilde{\sigma}_2 \widetilde{\sigma}_3\quad {\rm and}\quad \tau_2 \widetilde{\sigma}_1\widetilde{\sigma}_2 \widetilde{\sigma}_3 = \widetilde{\tau}_1.$$
                Moreover, by Corollaries \ref{cor:SubgroupTran-A} and \ref{cor:SubgroupTran-B},  the action of $\langle \tau_2, \widetilde{\tau}_1, \widetilde{\sigma}_1, \widetilde{\sigma}_3 \rangle$ on the set $\{1,2,\cdots, d\}$ is transitive. Therefore, the following five permutations $\bigl(\widetilde{\tau}_1\bigr)^{-1},\,\tau_2,\, \widetilde{\sigma}_1,\, \widetilde{\sigma}_2,\, \widetilde{\sigma}_3$ satisfy the three properties.
              \end{itemize}
          \end{itemize}
       \end{itemize}
    \end{itemize}
\end{proof}

\section{Proof of Theorem \ref{thm:Belyi}}
\label{subsec:Belyi}
We prove Theorem \ref{thm:Belyi} in this subsection.

\begin{proof}
  Consider the collection
    \[\Lambda^*=\{(a_1,\cdots, a_p),\,(b_1,\cdots, b_q),\,(c_1+1,1,\cdots, 1),\cdots,(c_r+1,1,\cdots,1)\}\]
  of degree $d$ and with total branching $2d-2$. By Theorem \ref{thm:per} and the Riemann existence theorem, there exists a rational function $f$ of degree $d$ on the Riemann sphere such that
  \begin{itemize}
    \item[(i)] $f$ branches over $0$, $\infty$,  $\zeta$, $\cdots$, $\zeta^r$, where $\zeta=\exp\,\bigl(2\pi\sqrt{-1}/r\bigr)$.
    \item[(ii)] The partitions of the above branch points coincide with $(a_1,\cdots, a_p),\,(b_1,\cdots, b_q),\,(c_1+1,1,\cdots, 1),\cdots,(c_r+1,1,\cdots,1)$, respectively.
  \end{itemize}
  Then $f^r$ is the Belyi function as desired.
\end{proof}

%The authors believe that there should be an alternative proof of Theorem \ref{thm:Belyi} via dessins d'enfants.

\section{A conjecture}

In order to generalize the first part of Boccara's result, we make the following

\nd {\bf Conjecture} Let $d,\,g$ and $l$ be three positive integers. Suppose that the collection $\Lambda$ consists of $l+2$ partitions of $d$ and has form
  \[\Lambda=\{(a_1,\cdots, a_p),\,(b_1,\cdots, b_q),\,(m_1+1,1,\cdots,1),\cdots, (m_l+1,1,\cdots,1)\},\]
where $(m_1,\cdots, m_l)$ is a partition of $p+q-2+2g$. Then there exists a rational function on some compact Riemann surface of genus $g$ with branch data $\Lambda$ if and only if
  \[\max\,\bigl(m_1,\cdots,m_l\bigr)<d.\]

\nd {\bf Acknowledgments.}  The authors would like to express his deep gratitude to Professor Qing Chen, Professor Mao Sheng and Mr Bo Li for their valuable discussions during the course of this work. The second author is indebted to Mr Weibo Fu for informing him  \cite{Bo82, EKS84, Zh06}.

The first author is supported in part by the National Natural Science Foundation of China (grant no.
11426236). The second author is supported in part by the National Natural Science Foundation of China (grants no. 11571330 and no. 11271343) and
the Fundamental Research Funds for the Central
Universities (grant no. WK3470000003).}

% It seems that some results by Lisa Goldberg are useful for me to study
% the moduli of trivial reducible metrics on the two-sphere.

% The background from HCMU metrics should be included in Section 4.

% I might obtain some new stimulation after learning the Theta functions and
%the Theta divisors on compact Riemann surfaces

\end{document}